\documentclass[a4paper,11pt]{article}
\usepackage{mathrsfs}
\usepackage{}
\usepackage[top=2.5cm,bottom=2.5cm,left=2.5cm,right=2.5cm]{geometry}
\usepackage{amssymb}
\usepackage{graphicx}
\usepackage{amsmath,amsthm,amssymb,lineno}
\setcounter{MaxMatrixCols}{16}
\usepackage{latexsym}
\usepackage{epstopdf}
\usepackage{setspace}
\usepackage{graphicx,booktabs,multirow}
\usepackage{latexsym, tabularx,shapepar}
\usepackage[all,2cell,dvips]{xy} \UseAllTwocells \SilentMatrices
\usepackage{appendix}
\usepackage{longtable}
\usepackage{cite}
\usepackage{CJK}
\usepackage{indentfirst}
\usepackage{array}
\usepackage{amsmath}
\allowdisplaybreaks[4]
\graphicspath{{figures/}}

\newtheorem{theorem}{Theorem}[section]
\newtheorem{lemma}[theorem]{Lemma}
\newtheorem{corollary}[theorem]{\rm\bfseries Corollary}

\newtheorem{remark}[theorem]{Remark}
\begin{document}

\title{
Extremed signed graphs for triangle
}
\author{  Dijian Wang$^{a}$,    Yaoping Hou$^{b}$\thanks {Corresponding author: yphou@hunnu.edu.cn},  Deqiong Li$^{c}$ \\
\small  $^{a}$School of Science, Zhejiang University of Science and Technology, \\
\small Hangzhou, Zhejiang, 310023, P. R. China\\
\small $^{b}$College of Mathematics and Statistics, Hunan Normal University,\\
\small Changsha,  Hunan, 410081, P. R. China\\
\small $^c$School  of Mathematics and Statistics, Hunan  University of  Technology and Business, \\
\small  Changsha, Hunan, 410205, P. R. China\\}
\date{}
\maketitle
\begin{abstract}
In this paper, we study the  Tur\'{a}n  problem of signed graphs version.
 Suppose that $\dot{G}$ is a connected unbalanced signed graph of order $n$ with   $e(\dot{G})$ edges and  $e^-(\dot{G})$ negative edges, and let $\rho(\dot{G})$ be the spectral radius of $\dot{G}.$  
 The signed graph $\dot{G}^{s,t}$ ($s+t=n-2$)  is obtained from an all-positive clique $(K_{n-2},+)$ with  $V(K_{n-2})=\{u_1,\dots,u_s,v_1,\dots,v_t\}$ ($s,t\ge 1$)
 and two isolated vertices $u$ and $v$ by   adding negative edge $uv$ and  positive edges $uu_1,\dots,uu_s,vv_1,\dots,vv_t.$
  Firstly, we prove that if $\dot{G}$ is $C_3^-$-free, then $e(\dot{G})\le \frac{n(n-1)}{2}-(n-2),$ with equality holding if and only if $\dot{G}\sim \dot{G}^{s,t}.$ 
  Moreover, $e^-(\dot{G}^{s,t})\le \lfloor\frac{n-2}{2}\rfloor\lceil\frac{n-2}{2}\rceil+n-2,$ with equality holding if and only if $\dot{G}^{s,t}= \dot{G}_U^{\lfloor\frac{n-2}{2}\rfloor,\lceil\frac{n-2}{2}\rceil},$ where  $\dot{G}_U^{\lfloor\frac{n-2}{2}\rfloor,\lceil\frac{n-2}{2}\rceil}$ is obtained from $\dot{G}^{\lfloor\frac{n-2}{2}\rfloor,\lceil\frac{n-2}{2}\rceil}$
  by switching at vertex set $U=\{v,u_1,\dots,u_{\lfloor\frac{n-2}{2}\rfloor}\}.$
  Secondly, we prove that if $\dot{G}$ is $C_3^-$-free, then $\rho(\dot{G})\le \frac{1}{2}( \sqrt{ n^2-8}+n-4),$ with equality holding if and only if $\dot{G}\sim \dot{G}^{1,n-3}.$
\\
\noindent
\textbf{AMS classification}: 05C50, 05C22\\
{\bf Keywords}: Signed graphs; Adjacency eigenvalues; Spectral radius; Spectral Tur\'{a}n  problem.
\end{abstract}

\baselineskip=0.30in

\section {Introduction}
In this paper, we consider only simple, undirected and finite graphs $G$ with $n$ vertices and $e(G)$ edges.
 For  a graph $G=(V(G),E(G)),$
let $A(G)$ denote the adjacency matrix of $G,$
the eigenvalues of $A(G)$ are all real and the spectral radius  of $G$  is equal to the  largest eigenvalue of $A(G)$.
Signed graphs $\dot{G}=(G,\sigma)$ are graphs $G$ whose edges get signs $+1$ or $-1,$  where $G$ is called \emph{underlying graph} and $\sigma: E(G)\rightarrow  \{+1,-1\}$ is a 
sign function.
The edge $uv$ is  \emph{positive} (\emph{negative}) if $uv$ gets  sign $+1$ (resp. $-1$)
and denoted by $u\mathop{\sim}\limits^{+} v$ (resp. $u\mathop{\sim}\limits^{-} v$).  Signed graph $\dot{G}$ is called  \emph{homogeneous} if 
all  edges get same sign,  and \emph{heterogeneous} otherwise.
If all edges  get signs $+1$ (resp. $-1$), then $\dot{G}$ is called \emph{all-positive} (resp. \emph{all-negative}) and  denoted by $(G,+)$  (resp. $
(G,-)$).

Let $H$ be a simple graph. A graph $G$ is $H$-free if there is no subgraph of $G$ isomorphic to $H$.
 The \emph{Tur$\acute{a}$n number}, $ex(n,H),$ is the maximum number of edges in a $H$-free graph of order $n.$ 
 The  \emph{Tur\'{a}n problem} is to
  determine the exact value of $ex(n,H),$ which is a central problem of extremal graph theory. 
 However, there are only a few cases when the  \emph{Tur$\acute{a}$n number} is known. For more details on this topic, the readers may be referred to see \cite{T39} for  $ex(n,K_{r}),$ \cite{B1971,B71,F15} for $ex(n,C_{2k+1}),$ and \cite{B74,F96,F06} for $ex(n,C_{2k}).$

Recently, many researches 
 focused on spectral analogues of the Tur\'{a}n type problem for graphs, which was proposed by Nikiforov \cite{N09}. The spectral Tur\'{a}n type problem is to determine the maximum spectral radius instead of the number of edges among all $n$-vertex $H$-free graphs. The graph which attains the maximum spectral radius is called a spectral extremal graph. 
 In the past few decades, much attention has been paid to search for the spectral extremal graph for various families of graphs $H$ such as $\{K_r\}$ \cite{N07,W86}, $\{K_{s,t}\}$ \cite{B09,N07} and $\{C_4,C_6\}$ \cite{N07,Z12,Z20}. 
 
The \emph{adjacency matrix} of  $\dot{G}$ is defined by $A_\sigma=(\sigma_{ij}),$ where 
$\sigma_{ij} =\sigma(v_iv_j)$ if $v_i \sim v_j,$ and $\sigma_{ij} = 0$ otherwise.
The eigenvalues of $\dot{G}$ are identified to be the eigenvalues of $A_\sigma$,
denoted by $\lambda_1\geq  \lambda_2\geq  \dots\geq \lambda_n$. 
The spectral radius of $\dot{G}$ is usually defined by $\rho(\dot{G})=max\{|\lambda_i|:1\le i\le n\}=max\{\lambda_1,-\lambda_n\}.$
The sign of a cycle $C$ of $\dot{G}$ is  $\sigma(C)=\prod_{e\in C}\sigma (e)$, whose sign is $+1$ (resp. $-1$) is called \emph{positive} (resp. \emph{negative}). A signed graph $\dot{G}$ is called \emph{balanced} if  all its cycles are positive; otherwise it is called \emph{unbalanced.}

An important feature of signed graphs is the concept of \emph{switching} the signature.   For $U\subset V (\dot{G}),$ let $\dot{G}_U=(G,\sigma_U)$ be the signed graph obtained from $\dot{G}=(G,\sigma)$ by reversing the signature of the edges in the cut $[U, V (G) \setminus U], $
namely $\sigma_U(e) = -\sigma(e)$ for any edge $e$ between $U$ and $V(G) \setminus U,$ and $\sigma_U(e) = \sigma(e)$ otherwise. We say that  two signed graphs $\dot{G}$ and $\dot{G}_U$ are   switching equivalent,  write $\dot{G} \sim \dot{G}_U$. 
Furthermore, it is important to observe that switching equivalent signed graphs have similar adjacency matrices. In fact, any switching arising from $U$ can be realized by a diagonal matrix $S_U =diag(s_1,s_2,\dots,s_n)$ having $s_i = 1$ for each $i\in U$, and $s_i = -1$ otherwise. Hence, $A_\sigma = S_U A_{\sigma_U}S_U$; in this case we say that the matrices are signature similar. When we consider a signed graph $\dot{G}$, from a spectral viewpoint, we always consider its switching equivalent class $[\dot{G}]$. From the eigenspace viewpoint, the eigenvector components are also switched in signs. Evidently, for each eigenvector, there exists a suitable switching such that all components become nonnegative.
 For more basic results on signed graphs, we refer to Zaslavsky 
\cite{Z82,Z08}.

Signed graphs are natural generalizations of (unsigned) graphs. Therefore, spectral problems defined and studied for unsigned graphs can be considered in terms of signed graphs, and sometimes such generalization shows nice properties which cannot be appreciated in terms of (unsigned) graphs.  Such as Huang \cite{H19} studied the spectral properties of the  adjacency matrix of signed hypercubes $(Q_n,\sigma)$ and solved Sensitivity Conjecture.
In 2018,  Belardo, Cioab$\breve{a}$, Koolen and Wang \cite{B18} surveyed some general results on the  spectra of signed graphs and one of  results is that: for  a signed graph $\dot{G}=(G,\sigma),$ then $\rho(\dot{G})\le \rho(G)$. 
Because of this result, when we study the spectral radius of signed graph, we always  exclude the balanced case and
reduce to the unbalanced signed graphs. In 2019, Akbari,  Belardo,  Heydari,  Maghasedi and  Souri \cite{A19} determined signed graphs achieving the minimal or the maximal index in the class of unbalanced unicyclic graphs.
In 2021, He, Li, Shan and Wang \cite{H21} determined the first five largest indices among all unbalanced signed bicyclic graphs.  In 2022, Brunetti and Stani\'{c} \cite{B22} studied the extremal spectral radius in all unbalanced connected signed graphs. 

Note that the signed triangle $(C_3,\sigma)$ has only two switching equivalences: one is all-positive (denoted by  $C_3^+$) and another contains one negative edge (denoted by $C_3^-$).
In this paper, we study the (spectral) Tur\'{a}n problem in a $C_3^-$-free or a $C_3^+$-free signed graph of order $n$.

For a simple  graph $G,$ it is known that $e(G)\le \frac{n(n-1)}{2},$ with equality holding if and only if $G$ is a complete graph. Up to switching equivalence, we know that a complete signed graph $\dot{G}=(K_n,\sigma)$ is $C_3^+$-free (resp. $C_3^-$-free) if and only if  $\dot{G}\sim (K_n,-)$ (resp. $\dot{G}\sim (K_n,+)$). Then   the following result holds.

\begin{theorem}\label{p1}
Let $\dot{G}=(G,\sigma)$  be a connected signed graph with  $n$ vertices and $e(\dot{G})$ edges.

$(i)$ If  $\dot{G}$ is  $C_3^+$-free, then $e(\dot{G})\le \frac{n(n-1)}{2},$ with equality holding if and only if $\dot{G}\sim (K_n,-).$

$(ii)$ If  $\dot{G}$ is  $C_3^-$-free, then $e(\dot{G})\le \frac{n(n-1)}{2},$ with equality holding if and only if $\dot{G}\sim (K_n,+).$
\end{theorem}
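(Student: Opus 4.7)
The edge bound $e(\dot G)\le \binom{n}{2}$ is immediate because the underlying graph $G$ is simple, and equality forces $G=K_n$. Thus both parts of the theorem reduce to the same task: classify, up to switching, the signatures $\sigma$ on $K_n$ that avoid $C_3^+$ or $C_3^-$ as a signed subgraph. This is precisely the observation stated in the sentence preceding the theorem, and I would furnish it as follows.

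For part (ii), suppose $(K_n,\sigma)$ is $C_3^-$-free, so $\sigma(T)=+1$ for every triangle $T$. I would prove by induction on cycle length that $(K_n,\sigma)$ is balanced. Any cycle $C$ of length $k\ge 4$ in $K_n$ admits a chord $e$ that splits it into two strictly shorter cycles $C_1,C_2$ whose edge sets satisfy $E(C_1)\cup E(C_2)=E(C)\cup\{e\}$ (with $e$ appearing in both), so $\sigma(C)=\sigma(C_1)\sigma(C_2)\sigma(e)^2=\sigma(C_1)\sigma(C_2)$. By the inductive hypothesis $\sigma(C_1)=\sigma(C_2)=+1$, hence $\sigma(C)=+1$. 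Harary's balance theorem — equivalently, the fact that a signed graph is switching equivalent to an all-positive signature if and only if every cycle is positive — then yields $\dot G\sim(K_n,+)$.

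For part (i), substitute $\tau:=-\sigma$. Every triangle $T$ of $(K_n,\tau)$ has $\tau(T)=(-1)^3\sigma(T)=+1$, because the $C_3^+$-free hypothesis on $\dot G$ gives $\sigma(T)=-1$. Applying part (ii) to $(K_n,\tau)$ produces a switching set $U$ with $\tau_U$ all-positive. Since switching acts on an edge $uv$ by multiplication with $s_us_v$ where $s_w\in\{\pm1\}$, negation and switching commute, i.e.\ $(-\sigma)_U=-\sigma_U$. Consequently $\sigma_U$ is all-negative and $\dot G\sim(K_n,-)$.

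The only substantive step is the triangle-to-cycle induction in the chordal graph $K_n$, and this is the mild obstacle I would expect; everything else is bookkeeping with the switching operation. Alternatively one could observe that the triangles generate the binary cycle space of $K_n$, so the triangle signs determine all cycle signs in one stroke, after which Harary's theorem closes the argument.
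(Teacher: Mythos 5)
Your proof is correct. The paper offers no argument at all --- it declares Theorem \ref{p1} ``trivial'' immediately after asserting exactly the classification you establish (a complete signed graph is $C_3^-$-free iff switching equivalent to $(K_n,+)$, and $C_3^+$-free iff switching equivalent to $(K_n,-)$) --- so your chord-induction showing that positive triangles force all cycles positive, followed by Harary's balance criterion and the negation trick for part $(i)$, supplies precisely the justification the paper omits; the only step left implicit on your side is the easy converse that $(K_n,\pm)$ and all of its switchings are indeed $C_3^\mp$-free with $\binom{n}{2}$ edges, which is immediate because the sign of a cycle is a switching invariant.
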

 Obviously, Theorem \ref{p1} is trivial.
 Therefore, we should focus our attention on  the heterogeneous and unbalanced signed graphs. Let $\dot{G}$ be a  connected unbalanced signed graph with  $n$ vertices and $e(\dot{G})$ edges.  If $\dot{G}$ is  $C_3^+$-free, 
  we can see that the all-negative complete signed graph $(K_n,-)$ attains the   maximum number of edges (see Theorem \ref{p1}) and  the maximum  spectral radius (see \cite[Theorem 3.1]{B22}). Hence here we mainly consider that $\dot{G}$ is  $C_3^-$-free. 
 This paper we will determine  signed graphs of order $n$ that have the maximum number of edges and  the maximum  spectral radius among all unbalanced signed graphs containing no $C_3^-.$

For two   vertex-disjoint graphs $G$ and $H,$
 $G\cup H$ denotes the disjoint union of $G$ and $H$, and  $G\vee H$ denotes the join of $G$ and $H,$ which is obtained from  $G\cup H$ by adding all possible edges between $G$ and $H.$
The signed graph $\dot{G}^{s,t}$ ($s+t=n-2$ and $s,t\ge 1$)  is obtained from an all-positive clique $(K_{n-2},+)$ with  $V(K_{n-2})=\{u_1,\dots,u_s,v_1,\dots,v_t\}$
 and two isolated vertices $u$ and $v$ by   adding  negative edge $uv$ and  positive edges $uu_1,\dots,uu_s,vv_1,\dots,vv_t.$ See Fig. \ref{f1}.
Our main results are reported as followings.

\begin{theorem}\label{thm1}
Let $\dot{G}=(G,\sigma)$  be a connected unbalanced  signed graph of order $n$ with   $e(\dot{G})$ edges and  $e^-(\dot{G})$ negative edges.
 If  $\dot{G}$ is  $C_3^-$-free, then  $$e(\dot{G})\le \frac{n(n-1)}{2}-(n-2),$$ with equality holding if and only if $\dot{G}\sim \dot{G}^{s,t},$ where $s+t=n-2$ and $s,t\ge 1$.  Moreover, $$e^-(\dot{G}^{s,t})\le \lfloor\frac{n-2}{2}\rfloor\lceil\frac{n-2}{2}\rceil+n-2,$$ with equality holding if and only if $\dot{G}^{s,t}=\dot{G}_U^{\lfloor\frac{n-2}{2}\rfloor,\lceil\frac{n-2}{2}\rceil},$ where  $\dot{G}_U^{\lfloor\frac{n-2}{2}\rfloor,\lceil\frac{n-2}{2}\rceil}$ is obtained from $\dot{G}^{\lfloor\frac{n-2}{2}\rfloor,\lceil\frac{n-2}{2}\rceil}$
  by switching at vertex set $U=\{v,u_1,\dots,u_{\lfloor\frac{n-2}{2}\rfloor}\}.$ See Fig. \ref{f1}.
\end{theorem}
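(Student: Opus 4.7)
The plan is to prove the edge bound via a shortest-negative-cycle analysis and the negative-edge bound via a cut optimization on the underlying graph.

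For the edge bound, let $C=a_1a_2\cdots a_k$ be a shortest negative cycle of $\dot{G}$ (it exists by unbalancedness). Since $\dot{G}$ is $C_3^-$-free, $k\ge 4$, and $C$ must be chordless: any chord would split $C$ into two strictly shorter cycles, both positive by minimality, whose product yields $\sigma(C)=+1$---a contradiction. Next, no external vertex $w\notin V(C)$ can be adjacent to all of $a_1,\ldots,a_k$: if so, each triangle $wa_ia_{i+1}$ is positive, and multiplying over $i$ cancels the $\sigma(wa_i)$'s in pairs, giving $\sigma(C)=+1$---again a contradiction. Counting missing edges ($\binom{k}{2}-k$ chords of $C$ plus at least $n-k$ external non-edges) yields at least $\binom{k}{2}-2k+n\ge n-2$ non-edges, with equality iff $k=4$. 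Hence $e(\dot{G})\le \binom{n}{2}-(n-2)$.

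For the equality case, $k=4$ with chords $a_1a_3$ and $a_2a_4$ missing, each external vertex having exactly one non-neighbor in $V(C)$, and no other non-edges. Set $N_i=\{w\notin V(C):w\not\sim a_i\}$; the main obstacle is to show that the $N_i$'s are supported on an \emph{adjacent} pair of cycle vertices. Suppose instead that $N_1\ne\emptyset$ and $N_3\ne\emptyset$; pick $w_1\in N_1$, $w_3\in N_3$. The six triangles on $\{a_1,\ldots,a_4,w_1,w_3\}$---namely $a_1a_2w_3$, $a_1a_4w_3$, $a_2a_3w_1$, $a_3a_4w_1$, $a_2w_1w_3$, $a_4w_1w_3$---are all present and must be positive. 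Combining the last two (cancelling $\sigma(w_1w_3)$) gives $\sigma(a_2w_1)\sigma(a_2w_3)=\sigma(a_4w_1)\sigma(a_4w_3)$. Using the first four to eliminate $\sigma(a_2w_3),\sigma(a_4w_3),\sigma(a_2w_1),\sigma(a_4w_1)$ in favor of the cycle-edge signs then forces $\sigma(C)=+1$, contradicting negativity. So WLOG $N_3=N_4=\emptyset$; set $u:=a_1$, $v:=a_2$, and the remaining structure---where $\{a_3,a_4\}\cup N_1\cup N_2$ forms a $K_{n-2}$, $u$ is joined to $\{a_4\}\cup N_2$ of size $s$, and $v$ is joined to $\{a_3\}\cup N_1$ of size $t$, with $s+t=n-2$---matches the underlying graph of $\dot{G}^{s,t}$. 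Finally, Theorem~\ref{p1}(ii) applied to the induced $(K_{n-2},\sigma)$ shows it is switching-equivalent to $(K_{n-2},+)$; after that switching, the triangles $uu_iu_j$ and $vv_iv_j$ force $\sigma(uu_i)$ and $\sigma(vv_j)$ to be constant in $i,j$; switching at $\{u\}$ and/or $\{v\}$ as needed normalizes these signs to $+$, after which the negativity of $C$ forces $\sigma(uv)=-1$. Hence $\dot{G}\sim\dot{G}^{s,t}$.

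For the negative-edge bound, the signed-cut identity
\begin{equation*}
e^-(\dot{G}^{s,t}_U)=e^-(\dot{G}^{s,t})+|E^+\cap[U,\bar U]|-|E^-\cap[U,\bar U]|
\end{equation*}
simplifies, since $uv$ is the unique negative edge of $\dot{G}^{s,t}$, to
\begin{equation*}
e^-(\dot{G}^{s,t}_U)=|[U,\bar U]_G|+1-2\cdot\mathbf{1}[uv\in[U,\bar U]].
\end{equation*}
This reduces the problem to a max-cut on the underlying graph $G$ with a $-2$ penalty for cutting $uv$. Parameterizing $U$ by $a=|U\cap\{u_i\}|$, $b=|U\cap\{v_j\}|$, $\alpha=\mathbf{1}[u\in U]$, $\beta=\mathbf{1}[v\in U]$ and performing a short case analysis on $(\alpha,\beta)$, the maximum is attained in the case $\alpha\ne\beta$ with $(a,b)=(s,0)$ (or the symmetric choice), giving $|[U,\bar U]_G|=st+s+t+1$ and hence $e^-(\dot{G}^{s,t}_U)=st+(n-2)$. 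Optimizing over $(s,t)$ with $s+t=n-2$ then yields $s=\lfloor(n-2)/2\rfloor$, $t=\lceil(n-2)/2\rceil$, giving the claimed bound $\lfloor(n-2)/2\rfloor\lceil(n-2)/2\rceil+(n-2)$, attained by $U=\{v,u_1,\ldots,u_{\lfloor(n-2)/2\rfloor}\}$ as stated. The structural reconstruction in Part~1's equality case, where triangle identities must be combined to exclude the ``diagonal'' configuration for the $N_i$'s, is the main obstacle; the optimization in Part~2 is routine by comparison.
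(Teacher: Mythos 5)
Your treatment of the edge bound and its equality case is correct and takes a genuinely different route from the paper. The paper fixes a negative edge $uv$ on a shortest unbalanced cycle and runs a case analysis on $N(u)\cap N(v)$, counting lost edges vertex by vertex with several subcases; you instead show that a shortest negative cycle is an induced cycle to which no external vertex is completely joined (both by telescoping products of positive triangles), which gives the bound $\binom{k}{2}-2k+n\ge n-2$ in one count and isolates $k=4$ at equality. Your six-triangle product identity, whose product collapses to $\sigma(C)=+1$, cleanly excludes the diagonal configuration of the sets $N_i$, and the reconstruction of $\dot{G}^{s,t}$ from there is sound. This is tidier than the paper's Case 2 and I see no gap in it.

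The negative-edge part, however, contains a concrete error. The signed-cut identity is right, but the asserted outcome of the case analysis --- that for fixed $(s,t)$ the maximum of $e^-(\dot{G}^{s,t}_U)$ over $U$ equals $st+(n-2)$, attained at $(a,b)=(s,0)$ with $\alpha\ne\beta$ --- is false. Take $n=8$, $(s,t)=(1,5)$ and $U=\{v,u_1,v_1\}$: the cut contains $uv$, $uu_1$, the four edges $vv_2,\dots,vv_5$, and the eight clique edges between $\{u_1,v_1\}$ and $\{v_2,\dots,v_5\}$, so $|[U,\bar U]|=14$ and $e^-(\dot{G}^{1,5}_U)=13$, whereas $st+(n-2)=11$. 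The reason is that in the case $\alpha\ne\beta$ the quantity to maximize is $m(n-2-m)+(a-b)+t$ with $m=a+b$, and when $s$ is small it pays to put some $v_j$ into $U$ to enlarge the clique-cut term $m(n-2-m)$ even though $a-b$ drops below $s$; so your two-step optimization (first over $U$ for fixed $(s,t)$, then over $(s,t)$) rests on a false intermediate claim. The theorem survives under the same method if you bound globally instead: in the case $\alpha\ne\beta$ use $a-b\le s$ and $m(n-2-m)\le\lfloor\frac{n-2}{2}\rfloor\lceil\frac{n-2}{2}\rceil$ to get $e^-\le\lfloor\frac{n-2}{2}\rfloor\lceil\frac{n-2}{2}\rceil+s+t$, with equality forcing $a=s$, $b=0$ and $m=s=\lfloor\frac{n-2}{2}\rfloor$ (or the mirror image), which is exactly the claimed extremal $U$; in the case $\alpha=\beta$ one computes $e^-\le m(n-1-m)+1$ or $e^-\le(m+1)(n-2-m)+1$, both strictly below the bound for $n\ge 6$. (For comparison, the paper argues differently: it splits the edges into the star at $\{u,v\}$ and the clique, bounds the star's negative edges by $n-2$ after disposing of the all-negative-star case, and bounds the clique's negative edges by $\lfloor\frac{n-2}{2}\rfloor\lceil\frac{n-2}{2}\rceil$ via its Lemma 2.3.)
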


\begin{theorem}\label{thm2}
Let $\dot{G}=(G,\sigma)$  be  a connected  unbalanced signed graph of order $n$.
 If  $\dot{G}$ is  $C_3^-$-free, then  $$\rho(\dot{G})\le \frac{1}{2}( \sqrt{ n^2-8}+n-4),$$ with equality holding if and only if $\dot{G}\sim \dot{G}^{1,n-3}.$
\end{theorem}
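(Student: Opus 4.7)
My strategy has three stages: first reduce an extremal $\dot{G}$ to the family $\{\dot{G}^{s,t}\}$ using Theorem~\ref{thm1}, then compute $\rho(\dot{G}^{s,t})$ via an equitable partition, and finally optimize over $(s,t)$ with $s+t=n-2$.

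For the reduction, let $\dot{G}$ be a connected $C_3^-$-free unbalanced signed graph attaining the maximum spectral radius. Since $\dot{G}^{1,n-3}$ contains $(K_{n-2},+)$ and therefore has $\lambda_1\ge n-3$, the extremal value of $\rho$ is realized by $\lambda_1$; after an appropriate switching I may assume the Perron eigenvector $x$ of $A_\sigma$ is strictly positive. The rank-one Rayleigh estimate $\lambda_1(A_\sigma + E) \ge \rho + 2x_ux_v/\|x\|^2$ rules out two improving modifications: (a) adding a positive edge at a non-adjacent pair $uv$ whose common neighbors all agree in sign (so no $C_3^-$ is created), and (b) flipping a negative edge $uv$ without a common neighbor to positive sign. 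After verifying that neither operation destroys the unbalance assumption, one concludes $e(\dot{G})=\binom{n}{2}-(n-2)$, and Theorem~\ref{thm1} then forces $\dot{G}\sim \dot{G}^{s,t}$ for some $s+t=n-2$.

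For the spectral computation, the partition $\pi=\{\{u\},\{v\},\{u_1,\dots,u_s\},\{v_1,\dots,v_t\}\}$ of $\dot{G}^{s,t}$ is equitable, with quotient matrix
\[
B_{s,t}=\begin{pmatrix} 0 & -1 & s & 0 \\ -1 & 0 & 0 & t \\ 1 & 0 & s-1 & t \\ 0 & 1 & s & t-1 \end{pmatrix}.
\]
Vectors orthogonal to $\pi$ live inside the all-positive cliques on $\{u_1,\dots,u_s\}$ and $\{v_1,\dots,v_t\}$ and contribute only the eigenvalue $-1$, so $\rho(\dot{G}^{s,t})$ equals the spectral radius of $B_{s,t}$. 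For $s=1,\,t=n-3$ a direct expansion gives the factorization $\det(\lambda I-B_{1,n-3})=(\lambda^2-2)\bigl(\lambda^2-(n-4)\lambda-2(n-3)\bigr)$, whose largest root is $\tfrac{1}{2}(n-4+\sqrt{n^2-8})$, matching the claimed bound.

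To optimize, assume $s\le t$ by symmetry. For $s\ge 2$, taking $z$ to be the Perron eigenvector of $\dot{G}^{s,t}$ (constant on each partition class with values $z_u,z_v,z_u',z_v'$) and subtracting suitable pairs of the four quotient equations $B_{s,t}z=\rho z$ yields $z_v-z_u=(\rho+1)(t-s)\,z_u'/(\rho^2-1-t)>0$ (positivity of the denominator follows from $\rho\ge n-3$). Moving one clique-vertex from the $u$-side to the $v$-side (i.e.\ deleting the positive edge $uu_s$ and adding positive edge $vu_s$) transforms $\dot{G}^{s,t}$ into $\dot{G}^{s-1,t+1}$, and applying the Rayleigh quotient test to $z$ yields $\rho(\dot{G}^{s-1,t+1})\ge \rho(\dot{G}^{s,t}) + 2z_{u_s}(z_v-z_u)/\|z\|^2>\rho(\dot{G}^{s,t})$. (In the boundary case $s=t$ the linear term vanishes, but the Perron vector of $\dot{G}^{s-1,t+1}$ is asymmetric in $u$ versus $v$ whereas $z$ is symmetric, so strict inequality still holds.) Iterating shows $\rho(\dot{G}^{s,t})\le \rho(\dot{G}^{1,n-3})$ with equality iff $(s,t)\in\{(1,n-3),(n-3,1)\}$. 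The main obstacle is the first stage, namely converting the two local non-improvability conditions into the global identification $\dot{G}\sim \dot{G}^{s,t}$ and separately ruling out the possibility $\rho(\dot{G})=-\lambda_n(\dot{G})>\lambda_1(\dot{G})$; for the latter, considering the negated signed graph $-\dot{G}$ (in which every triangle is $C_3^-$) together with Belardo--Stani\'{c}'s bound $\rho(\dot{G})\le \rho(G)$ and the edge count from Theorem~\ref{thm1} should suffice.
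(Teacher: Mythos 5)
Your Stage 2 (the equitable partition of $\dot{G}^{s,t}$, the quotient matrix, and the factorization $(\lambda^2-2)(\lambda^2-(n-4)\lambda-2(n-3))$ for $s=1$) and Stage 3 (monotonicity of $\rho(\dot{G}^{s,t})$ in the imbalance of $(s,t)$) are essentially correct and reproduce Lemma~\ref{l3.6} of the paper, where the monotonicity is obtained more cleanly from $f(x,s-1,t+1)-f(x,s,t)=(2x+3)(s-t-1)$. The critical gap is Stage~1. You assert that ruling out two local improvements (adding a positive edge at a non-adjacent pair all of whose common neighbours agree in sign, and resigning a negative edge with no common neighbour) forces $e(\dot{G})=\binom{n}{2}-(n-2)$, after which Theorem~\ref{thm1} identifies $\dot{G}\sim\dot{G}^{s,t}$. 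This does not follow: a $C_3^-$-free signed graph can be edge-\emph{maximal} (every non-adjacent pair has common neighbours of both agreement types, so no edge of either sign can be added) while having far fewer than $\binom{n}{2}-(n-2)$ edges --- maximal is not maximum, just as maximal triangle-free graphs need not be Tur\'an graphs. A spectrally extremal graph is not a priori edge-extremal, so Theorem~\ref{thm1} cannot be invoked through this route. This reduction is exactly where the paper's work lies: it passes to a balanced spanning subgraph via Stani\'c's lemma (Lemma~\ref{l3.2}), proves three separate spectral estimates (Lemmas~\ref{l3.7}--\ref{l3.9}, packaged as Corollary~\ref{c3.9}) showing that any structural deviation from $\dot{G}^{s,t}$ already forces $\lambda_1<\lambda_1(\dot{G}^{1,n-3})$, and runs a case analysis on the signs of the eigenvector at $u,v,u_1,v_1$. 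None of that is replaced by your local argument.

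Two further problems. First, ``after an appropriate switching the Perron eigenvector is strictly positive'' is not available for signed graphs: switching only yields a \emph{nonnegative} eigenvector, and zero entries (as in the unbalanced $C_4$) would destroy the strictness of your Rayleigh estimates; the paper is careful to work only with nonnegativity. Second, for the case $\rho(\dot{G})=-\lambda_n(\dot{G})$ you propose combining $\rho(\dot{G})\le\rho(G)$ with the edge count from Theorem~\ref{thm1}; this bounds $\rho$ only by roughly $\sqrt{2e(\dot G)}\approx n-\tfrac32$, which is \emph{not} below the target $\tfrac12(\sqrt{n^2-8}+n-4)\approx n-2$ (indeed the underlying graph $G^{1,n-3}$ itself has larger spectral radius than $\dot G^{1,n-3}$). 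The correct tool, used in the paper, is that $-\dot{G}$ is $C_3^+$-free and hence has balanced clique number $2$, so Lemma~\ref{lem3.5} gives $\lambda_1(-\dot{G})\le\sqrt{e(\dot{G})}\approx n/\sqrt{2}$, which does suffice.
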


\section{Proof of Theorem \ref{thm1}}

   \begin{figure}
\begin{center}
  \includegraphics[width=14cm,height=3cm]{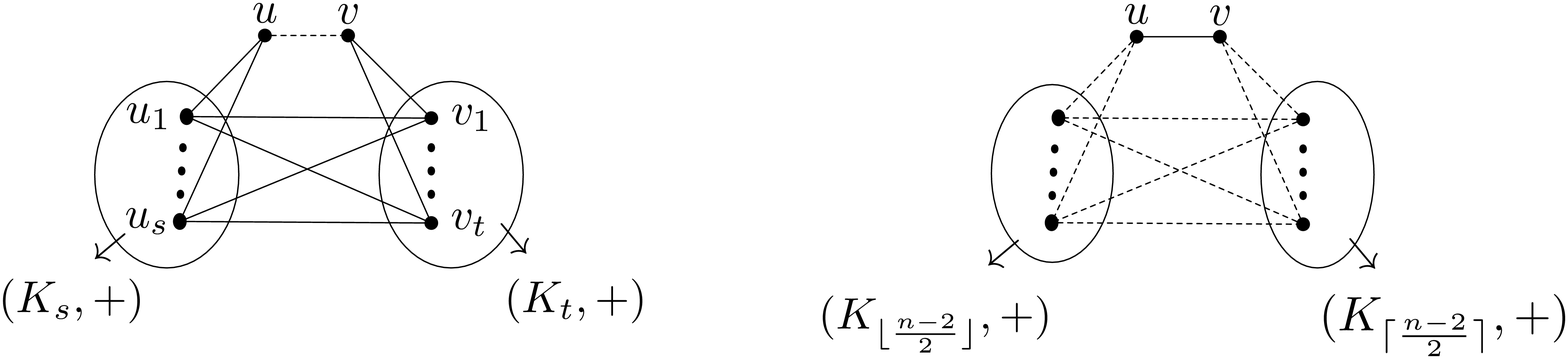}
  \end{center}
   \vskip -0.6cm\caption{The signed graphs $\dot{G}^{s,t}$ and $\dot{G}_U^{\lfloor\frac{n-2}{2}\rfloor,\lceil\frac{n-2}{2}\rceil}$ (dashed lines indicate negative edges).}
  \label{f1}
\end{figure}

Let $\dot{G}$ be a  connected unbalanced signed graph  of order $n$ with   $e(\dot{G})$ edges and  $e^-(\dot{G})$ negative edges.
Since $\dot{G}$ is unbalanced, then $\dot{G}$ contains at least one unbalanced cycle $\mathcal{C}$.
If there are more than one unbalanced cycles,  we choose
one for which the length of $\mathcal{C}$ is minimal. 
If $n=4,$ then $\dot{G}$ is the unbalanced 4-cycle and Theorem \ref{thm1} is trivial. Then we assume that $n\ge 5.$

The first lemma is used to check whether two signed graphs are switching equivalent.
\begin{lemma}\cite[Lemma 3.1]{Z82}\label{lemma2.1}
Let $G$ be a connected graph and $T$ a spanning tree of $G.$ The each switching equivalence class of signed graphs on the graph $G$ has a unique representative which is $+1$ on $T.$ Indeed, given any prescribed sign function $\sigma_T: T \to \{+1,-1\},$ each switching class has a single representative which agrees with $\sigma_T$ on $T.$
\end{lemma}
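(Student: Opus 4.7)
The plan is to establish the existence and the uniqueness of the prescribed representative separately, exploiting the tree structure of $T$ in each direction.

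For existence, I would fix an arbitrary root $r$ of $T$ and construct a switching set $U\subseteq V(G)$ by processing vertices in breadth-first order along $T$. Place $r\notin U$ to fix the trivial ambiguity, and for each non-root vertex $v$ with parent $p$ in $T$, decide membership of $v$ by comparing the current sign $\sigma(pv)$ with the prescribed sign $\sigma_T(pv)$: put $v$ on the same side of the partition as $p$ when the two agree, and on the opposite side when they disagree. Since the tree-edge $pv$ lies in the cut $[U,V(G)\setminus U]$ (and so gets flipped by the switching) precisely when $p$ and $v$ end up on opposite sides of the partition, a routine induction on the tree-distance from $r$ shows that the switched signature $\sigma_U$ coincides with $\sigma_T$ on every edge of $T$.

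For uniqueness, suppose two signatures $\sigma_1$ and $\sigma_2$ on $G$ are switching equivalent and both restrict to $\sigma_T$ on $T$, so that $\sigma_2=(\sigma_1)_U$ for some $U\subseteq V(G)$. Then every tree edge $e$ satisfies $(\sigma_1)_U(e)=\sigma_1(e)$, which means that no edge of $T$ lies in the cut $[U,V(G)\setminus U]$. Equivalently, the indicator function of $U$ is constant along each edge of $T$; since $T$ is connected and spans $V(G)$, this function must be globally constant, forcing $U=V(G)$ or $U=\emptyset$. In either case the cut is empty, switching leaves every edge sign fixed, and hence $\sigma_1=\sigma_2$ on all of $E(G)$.

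The argument is essentially combinatorial and the one subtle point is recognizing that switching at $V(G)$ is the identity operation, so the two extreme choices of $U$ in the uniqueness step both correspond to the same signed graph. Connectivity of $G$ enters exactly through the spanning-tree hypothesis: it is what prevents independent sign adjustments on different pieces of $G$ and thereby pins down the representative. No additional machinery beyond the definition of the switching operation is required.
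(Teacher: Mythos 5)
Your proof is correct: the breadth-first construction of the switching set $U$ gives existence, and the observation that an empty cut on a spanning tree forces $U=\emptyset$ or $U=V(G)$ gives uniqueness. The paper itself offers no proof of this lemma (it is quoted verbatim from Zaslavsky \cite{Z82}), and your argument is the standard one found there, so nothing further is needed.
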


The set of   neighbours of a vertex $v$ (in $\dot{G}$) is denoted by $N(v).$ Then $N[v]=N(v)\cup \{v\}.$ The \emph{degree}
of a vertex $v$ (denoted by $d_v$) is the cardinality of $N(v)$. 
 For $U \subseteq V (G),$ let $\dot{G}[U]$ denote the  induced
subgraph by $U,$ which is of course a signed graph.

   Note that $\dot{G}$ is $C_3^-$-free and unbalanced, then we have the following two fact.

\noindent
\textbf{Fact 1.} For each vertex $v$ of $V(\dot{G})$, if 
all edges $vv_i$ get same signs 
 for each vertex $v_i\in N(v)$, then $\dot{G}[N(v)]$ is all-positive.

\noindent
\textbf{Fact 2.} For each vertex $v$ of $V(\dot{G})$, then $d_v\le n-2$.

Before giving the proof of Theorem \ref{thm1},
 we provide two lemmas.
 
  Since  $\mathcal{C}$ is unbalanced, then
 it  has at least  one negative edge, say $uv$. 
Let $N(u)=\{u_1,\dots,u_{s}\}$ and $N(v)=\{v_1,\dots,v_{t}\}$, then $s\ge 1$ and $t\ge 1,$ otherwise 
the edge $uv$  is not in any cycle.  By Lemma \ref{lemma2.1}, we can assume that $\sigma(uu_i)=+1$ for all $i=1,\dots,s.$ Let $\mathcal{C}=uu_iv_jvu$ (if the length of $\mathcal{C}$ is 4) or $\mathcal{C}=uu_iz_1\dots z_q v_jvu$ (if the length of $\mathcal{C}$ is greater than 4).

\begin{lemma}\label{2}
Let  $\mathcal{C}=uu_iv_jvu$ (if the length of $\mathcal{C}$ is 4) or $\mathcal{C}=uu_iz_1\dots z_q v_jvu$ (if the length of $\mathcal{C}$ is greater than 4) be the unbalanced cycle in $\dot{G}$. Then 
 $u_i\in N[u]\setminus N[v]$ and  
 $v_j\in N[v]\setminus N[u]$.
\end{lemma}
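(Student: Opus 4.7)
The plan is to prove both $u_i \notin N(v)$ and $v_j \notin N(u)$ by a standard shortcut-cycle argument, combining the minimality of $\mathcal{C}$ with the $C_3^-$-free hypothesis (equivalently, $\dot{G}$ has no unbalanced triangle). Since any unbalanced triangle would itself be a $C_3^-$, the length of the minimum unbalanced cycle $\mathcal{C}$ is at least $4$, so the vertices $u,u_i,v_j,v$ appearing on $\mathcal{C}$ are pairwise distinct; in particular $u_i\neq v$ and $v_j\neq u$.

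I first handle $u_i \notin N(v)$. Assume for contradiction that $u_i$ is adjacent to $v$. Then the three edges $uu_i$, $u_iv$, $uv$ form a triangle $T=uu_iv$ in $\dot{G}$, and the cycle $\mathcal{C}$ admits the edge-set decomposition $E(\mathcal{C})=E(T)\,\triangle\, E(\mathcal{C}'')$, where $\mathcal{C}''$ is the triangle $u_iv_jv$ when $\ell(\mathcal{C})=4$, and the cycle $u_iz_1z_2\cdots z_qv_jvu_i$ of length $q+3$ when $\ell(\mathcal{C})\ge 5$. In either subcase $\ell(\mathcal{C}'')<\ell(\mathcal{C})$. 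Using the well-known multiplicativity of cycle signs under symmetric difference, $\sigma(\mathcal{C})=\sigma(T)\cdot\sigma(\mathcal{C}'')$.

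Because $\mathcal{C}$ is unbalanced, exactly one of $T$, $\mathcal{C}''$ is negative. If $T$ is negative, then $T$ is an unbalanced triangle, i.e.\ a $C_3^-$, contradicting the hypothesis. If instead $\mathcal{C}''$ is negative, then in the subcase $\ell(\mathcal{C})=4$ the cycle $\mathcal{C}''$ is again an unbalanced triangle (contradiction), while in the subcase $\ell(\mathcal{C})\ge 5$ the cycle $\mathcal{C}''$ is an unbalanced cycle strictly shorter than $\mathcal{C}$, contradicting the minimality in the choice of $\mathcal{C}$. Hence $u_i\notin N(v)$, which together with $u_i\in N(u)$ gives $u_i\in N[u]\setminus N[v]$. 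The verification of $v_j\in N[v]\setminus N[u]$ is entirely symmetric: we form the triangle $uv_jv$ and the complementary cycle through $u,u_i,z_1,\ldots,z_q,v_j$, and repeat the same dichotomy.

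I do not anticipate any genuine obstacle. The one small point worth double-checking is the symmetric-difference identity $\sigma(\mathcal{C})=\sigma(T)\cdot\sigma(\mathcal{C}'')$, which uses only that $T$ and $\mathcal{C}''$ share exactly the single chord $u_iv$ (so its sign is squared out in the product). No use is made of the normalization $\sigma(uu_i)=+1$ granted by Lemma \ref{lemma2.1}, which is why the symmetric claim for $v_j$ goes through by the same proof.
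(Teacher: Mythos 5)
Your proof is correct and is essentially the paper's own argument: both insert the chord $u_iv$ (resp.\ $uv_j$), note that the resulting triangle through $u$ and $v$ cannot be a $C_3^-$, and conclude that the complementary shorter cycle is unbalanced, contradicting the minimality of $\mathcal{C}$. The only cosmetic difference is that you track signs via multiplicativity under symmetric difference, while the paper uses the normalization $\sigma(uu_i)=+1$ together with $\sigma(uv)=-1$ to deduce $\sigma(u_iv)=-1$ and hence the unbalancedness of the shortcut cycle directly; both are valid.
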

\begin{proof}
If the vertex $u_i$ in $\mathcal{C}$ belongs to  $N(u)\cap N(v),$ then 
$u_i\mathop{\sim}\limits^{-} v$ (as $\dot{G}$ is $C_3^-$-free) and
 we can find an unbalanced triangle  $\mathcal{C}=vu_iv_jv$ or an
  unbalanced cycle $\mathcal{C}^\prime=vu_iz_1\dots z_qv_jv$ with fewer length, which 
contradicts to the choice of $\mathcal{C}$. Thus, $u_i\in N[u]\setminus N[v].$ Similarly,  we have
 $v_j\in N[v]\setminus N[u].$
 \end{proof}

\begin{lemma}\label{lemma2.2}
For a connected $C_3^-$-free complete signed   graph $\dot{G}$, then $e^-(\dot{G})\le \lfloor\frac{n}{2}\rfloor\lceil\frac{n}{2}\rceil,$
with equality holding if and only if $\dot{G}$ is obtained  from $(K_n,+)$
  by switching at  $\lfloor\frac{n}{2}\rfloor$ vertices.
\end{lemma}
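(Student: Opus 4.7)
The plan is to first observe that on a complete underlying graph the hypothesis ``$C_3^-$-free'' already forces balance. More precisely, I would argue that every cycle $C = v_1v_2\cdots v_\ell v_1$ of length $\ell \ge 4$ in $\dot{G}$ is positive by induction on $\ell$. Since the underlying graph is $K_n$, the chord $v_1v_3$ is present and splits $C$ into the triangle $T = v_1v_2v_3v_1$ and the cycle $C' = v_1v_3v_4\cdots v_\ell v_1$ of length $\ell-1$. Because $\sigma(v_1v_3)^2 = 1$, the sign multiplies as $\sigma(C) = \sigma(T)\cdot \sigma(C')$; the base case is the $C_3^-$-free hypothesis, and the induction hypothesis takes care of $C'$. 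Hence every cycle of $\dot{G}$ is positive, i.e.\ $\dot{G}$ is balanced.

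Once balance is in hand, Zaslavsky's balance theorem (an immediate consequence of Lemma \ref{lemma2.1} applied with the all-$+1$ signing on a spanning tree) yields a subset $U\subseteq V(\dot{G})$ such that $\dot{G}$ is obtained from $(K_n,+)$ by switching at $U$. In that description the negative edges of $\dot{G}$ are precisely the edges of the cut $[U,V(\dot{G})\setminus U]$, so
\[
e^-(\dot{G}) = |U|\cdot (n-|U|).
\]

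Writing $k=|U|$, the elementary inequality $k(n-k)\le \lfloor n/2\rfloor\lceil n/2\rceil$ on $\{0,1,\dots,n\}$, with equality iff $k\in\{\lfloor n/2\rfloor,\lceil n/2\rceil\}$, delivers both the upper bound and the extremal characterisation. The two extremal values of $k$ produce the same signed graph, since switching at $U$ and at $V(\dot{G})\setminus U$ are equivalent, so one may insist that $|U|=\lfloor n/2\rfloor$, matching the statement. The only substantive step is the chord-decomposition argument in the first paragraph; everything afterwards is standard switching bookkeeping and a one-variable optimisation, so I do not anticipate a real obstacle.
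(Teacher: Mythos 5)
Your proof is correct, but it reaches the conclusion by a different route than the paper. You first establish that a $C_3^-$-free complete signed graph is balanced (by induction on cycle length, splitting a cycle of length $\ell\ge 4$ along the chord $v_1v_3$, which exists because the underlying graph is complete), and then invoke the Harary--Zaslavsky description of balanced signed graphs as switchings of the all-positive graph, so that the negative edges form a cut and $e^-(\dot{G})=|U|(n-|U|)$. The paper instead works locally: it fixes one vertex $v$, splits the remaining vertices into $V^+$ and $U^-$ according to the sign of their edge to $v$, and uses the $C_3^-$-free condition on each triangle through $v$ to determine every other edge sign directly ($+$ inside $V^+$ and inside $U^-$, $-$ across), which identifies the negative edge set as the cut $[\{v\}\cup V^+,\,U^-]$ without ever naming balance. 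Both arguments end with the same optimisation $k(n-k)\le\lfloor n/2\rfloor\lceil n/2\rceil$ and the observation that switching at $U$ and at its complement give the same graph. Your version is more conceptual and generalises readily (it isolates the balance statement, which is reusable); the paper's is more self-contained and avoids appealing to the balance/switching machinery beyond what Lemma \ref{lemma2.1} already provides. Either is acceptable.
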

\begin{proof}
Let $V(\dot{G})=\{v\}\cup V^+\cup U^-,$ where $\sigma(vv_i)=+1$ for each vertex $v_i\in V^+$ and  
$\sigma(vu_j)=-1$ for each vertex $u_j\in U^-.$
Since $\dot{G}$ is $C_3^-$-free, then $\sigma(v_iv_j)=+1$ for each edge $v_iv_j\in E(V^+),$ 
$\sigma(u_iu_j)=+1$ for each edge $u_iv_j\in E(U^-)$ and  $\sigma(v_iu_j)=-1$ for each edge $v_iu_j$. 
Then  $e^-(\dot{G})\le (|V^+|+1)|U^-| \le \lfloor\frac{n}{2}\rfloor\lceil\frac{n}{2}\rceil,$
with equality holding if and only if $\dot{G}$ is obtained  from $(K_n,+)$
  by switching at  $\lfloor\frac{n}{2}\rfloor$ vertices.
\end{proof}

\noindent\emph{Proof of Theorem \ref{thm1}:}
 Suppose that $e(\dot{G})=\frac{n(n-1)}{2}-\ell$. 
Now we  count that how many edges  are lost in  signed graph $\dot{G}$.
 That is to consider the value of $\ell.$ Since $e(\dot{G}^{s,t})=\frac{n(n-1)}{2}-(n-2),$ we may suppose that $\ell \le n-2.$
Let the unbalanced cycle $\mathcal{C}$ and the negative $uv$ in $\mathcal{C}$ be defined as before.
Now we divide the proof into two cases.

\textbf{Case 1.} $N(u)\cap N(v)=\emptyset.$  Then $s+t\le n-2,$  with equality holding if and only if each vertex  in $\dot{G}$ is adjacent to either $u$ or $v$. Up to switching equivalence, we  let $\sigma(vv_i)=+1$ for all $i=1,\dots,t.$
Note that
 $$e(\dot{G}[V(G)\setminus \{u,v\}])\le \frac{(n-2)(n-3)}{2},$$
 with equality holding if and only if   $\dot{G}[V(G)\setminus \{u,v\}]$ is a clique, then
 \begin{equation}
\begin{split}e(\dot{G})&\le d_{u}+d_v-1+e(\dot{G}[V(G)\setminus \{u,v\}])\\
&=s+t+1+e(\dot{G}[V(G)\setminus \{u,v\}])\\
&\le n-1+ \frac{(n-2)(n-3)}{2}\\
&=\frac{n(n-1)}{2}-(n-2),\end{split}
\end{equation}
with equality holding if and only if $s+t= n-2$ and $e(\dot{G}[V(G)\setminus \{u,v\}])= \frac{(n-2)(n-3)}{2}$ if and only if  each vertex  in $\dot{G}$ is adjacent to either $u$ or $v$ and $\dot{G}[V(G)\setminus \{u,v\}]$ is a clique. 
Since $\dot{G}$ is unbalanced and $C_3^-$-free, then $\dot{G}[V(G)\setminus \{u,v\}]$ is an all-positive clique. Hence, 
$\dot{G}\sim \dot{G}^{s,t}.$

\textbf{Case 2.} $N(u)\cap N(v)\ne \emptyset.$ Suppose that $N(u)\cap N(v)=\{w_1,\dots,w_k\}$ $(k\ge 1)$,
then $s+t-k\le n-2$. Let $N[u]\setminus N[v]=\{u_1,\dots,u_{s_1}\}$ ($s_1+k=s$) and $N[v]\setminus N[u]=\{v_1,\dots,v_{t_1}\}$ ($t_1+k=t$), where $s_1\ge 1$ and $t_1\ge 1$ by Lemma \ref{2}.  Since $\dot{G}$ is  $C_3^-$-free, then $\sigma(vw_i)=-1$ for  $i=1,\dots,k.$ Up to switching equivalence, we can let $\sigma(vv_i)=-1$ for 
each $v_i\in N[v]\setminus N[u].$ Now switching at vertex $v,$ then
 all edges associated to the vertex $v$ are positive.
  
  We now give the following claim.

\noindent\textbf{Claim 1.} Each vertex in $\dot{G}$ is adjacent to at least one vertex of $u$ and $v.$
\begin{proof}
Suppose that $Z=\{z_1,\dots,z_p\}$ is a vertex subset of $V(\dot{G})$ such that each vertex $z_i$ is 
 adjacent to neither  $u$ nor $v.$ Then $2+s_1+t_1+k+p=n.$
Recall that $\mathcal{C}=uu_iv_jvu$ or  $\mathcal{C}=uu_iz_1\dots z_qv_jvu$.
  If $\sigma(u_iv_j)=-1$, since $\dot{G}$ is   $C_3^-$-free, then $u_i$ and $v_j$ have no common neighbor in $N(u)\cap N(v).$  If $\sigma(u_iv_j)=+1$ or 0, then 
  the length of $\mathcal{C}$ is at least 5,  
   $u_i$ and $v_j$ also have no common neighbor in $N(u)\cap N(v)$,  otherwise there is a vertex $w_{k_1}\in N(u)\cap N(v)$ with $w_{k_1}\mathop{\sim}\limits^{+}  u_i$ and $w_{k_1}\mathop{\sim}\limits^{+}  v_j$, then  we  find an  unbalanced cycle $\mathcal{C}^\prime=w_{k_1}u_iz_1\dots z_qv_jw_{k_1}$  with fewer length, which 
contradicts to the choice of $\mathcal{C}$.
   And now  we  obtain that $\ell\ge s_1+t_1+k+2p=n-2+p\ge n-1,$ which contradicts the hypothesis. Hence no such  $Z$  exists and each vertex in $\dot{G}$ is adjacent to at least one vertex of $u$ and $v.$
\end{proof}

By Claim 1, then  $V(\dot{G})=\{u,v\}\cup N(u)\cup N(v),$ $2+s_1+t_1+k=n$ and $u_{i}\mathop{\sim}\limits^{-}  v_j.$ Without loss of generality, assume that 
$\mathcal{C}=uu_1v_1v$ and
$\sigma(u_1v_1)=-1.$ 
If $N(u_1)\cap N(v_1)= \emptyset,$ it is  similar to  Case 1.
 We next suppose that $N(u_1)\cap N(v_1)\ne \emptyset$ and
 $u_2\in N(u_1)\cap N(v_1)$, then $u_{2}\mathop{\sim}\limits^{+}  u_1$ and $u_{2}\mathop{\sim}\limits^{-}  v_1.$
 Since $\dot{G}$ is $C_3^-$-free, then 
 each vertex $w_i$ ($i=1,2,\dots,k$) is adjacent to at most one vertex of $u_1$ and $v_1.$
Up to now, we find that $$\ell\ge s_1+t_1+k=n-2.$$
In addition, 
we  obtain that each vertex $w_i$ ($i=1,2,\dots,k$) is adjacent to exactly one vertex of $u_1$ and $v_1,$ otherwise $w_i\not\sim u_1, v_1$, then $\ell\ge n-1,$ which contradicts the hypothesis.
Similarly, we get that each vertex $w_i$ is adjacent to  exactly one vertex of $u_2$ and $v_1.$

Let $W=W_1\cup W_2,$ where   each vertex in $W_1$ (resp. $W_2$) is adjacent to the vertex  $u_1$ (resp. $v_1$), $|W_i|=k_i$ for $i=1$ or 2, and $k_1+k_2=k$.  Then $\ell\ge s_1+t_1+2k_2+k_1= n-2+k_2.$ Since $\ell \le n-2,$ then $k_2=0,$
which means that $v_1\not\sim w_i$  and $u_1\sim w_i$ for all $i=1,\dots,k.$

\textbf{Subcase 2.1.} $t_1=1.$ Then $N(v)\cap N(v_1)= \emptyset,$  which is  similar to  Case 1.

\textbf{Subcase 2.2.} $t_1\ge 2.$ Then $v_2$ is adjacent to at most two vertices of $\{w_1,v_1,u_2\}$ (as $\dot{G}$ is $C_3^-$-free). And we  have   $\ell\ge n-1$,  which contradicts the hypothesis.

From the above two cases, we conclude that  if  $\dot{G}$ is  $C_3^-$-free, then  $e(\dot{G})\le \frac{n(n-1)}{2}-(n-2),$ with equality holding if and only if $\dot{G}\sim \dot{G}^{s,t}.$

Let $E(\dot{G}^{s,t})=E_1\cup E_2$, where $E_1=\{uv,uu_1,\dots,uu_s,vv_1,$ $\dots,v_t\}$ and 
$E_2=E(\dot{G}^{s,t})\setminus E_1.$
Then $e^-(\dot{G}^{s,t})=e^-(E_1)+e^-(E_2).$ If $e^-(E_1)=e(E_1)=n-1,$ since $\dot{G}$ is $C_3^-$-free and unbalanced,  we can see that  $\dot{G}^{s,t}[V(\dot{G}^{s,t})\setminus \{u,v\}]$ is all-positive and $e^-(E_2)=0$. Then $e^-(\dot{G}^{s,t})=n-1< \lfloor\frac{n-2}{2}\rfloor\lceil\frac{n-2}{2}\rceil+n-2$. If $e^-(E_1)\le n-2,$ then  
 \begin{equation}\label{equation2}
 e^-(\dot{G}^{s,t})=e^-(E_1)+e^-(E_2) \le n-2+ \lfloor\frac{n-2}{2}\rfloor\lceil\frac{n-2}{2}\rceil ~~\text{(by Lemma \ref{lemma2.2})}.
\end{equation}
If $e^-(E_1)=n-2$ and $\sigma(uu_i)=+1$ (or $\sigma(vv_i)=+1$) for one $i,$ then
$\dot{G}^{s,t}$ is switching equivalent to the signed graph $\dot{G}_{U_1}^{s,t}$ by  switching at $U_1=\{u_i\}$ whose $e^-(E_1)=e(E_1)=n-1.$ 
Then $e^-(\dot{G}^{s,t})=e^-(\dot{G}_{U_1}^{s,t})+n-3=n-1+n-3=2n-4< \lfloor\frac{n-2}{2}\rfloor\lceil\frac{n-2}{2}\rceil+n-2$. Therefore,  Eq. (\ref{equation2}) holds if and only if $e^-(E_1)=n-2$ and $\sigma(uv)=+1,$ and $e^-(E_2)= \lfloor\frac{n-2}{2}\rfloor\lceil\frac{n-2}{2}\rceil$ 
if and only if $\dot{G}^{s,t}=\dot{G}_U^{\lfloor\frac{n-2}{2}\rfloor,\lceil\frac{n-2}{2}\rceil},$ where  $\dot{G}_U^{\lfloor\frac{n-2}{2}\rfloor,\lceil\frac{n-2}{2}\rceil}$ is obtained from $\dot{G}^{\lfloor\frac{n-2}{2}\rfloor,\lceil\frac{n-2}{2}\rceil}$
  by switching at vertex set $U=\{v,u_1,\dots,u_{\lfloor\frac{n-2}{2}\rfloor}\}.$
\hfill $\square$

\section{Proof of Theorem \ref{thm2}}
Let $\dot{G}$ be a  connected unbalanced signed graph of order  $n$,
and let  the unbalanced cycle $\mathcal{C}$ and the negative $uv$ in $\mathcal{C}$ be defined in Section 2.
 Based on Fact 2, by the table of the spectra of  signed graphs with at most six vertices \cite{B91}, we can check that Theorem \ref{thm2} is true for $n\le6.$ Therefore, we next  assume that 
 $n\ge 7.$
 
We first introduce some results about the largest eigenvalue of a graph $G$ or a signed graph $\dot{G}.$
The first lemma is \emph{Cauchy Interlacing Theorem}.
\begin{lemma}\label{Lem2.5}\cite{B11}
Let $A$ be a symmetric matrix of order $n$ with eigenvalues $\lambda_1\geq\lambda_2\geq\dots \geq\lambda_n$ and $B$ a principal submatrix of $A$ of order $m$ with eigenvalues $\mu_1\geq\mu_2\geq\dots \geq\mu_m.$ Then the eigenvalues of $B$ interlace the eigenvalues of $A,$ that is, $\lambda_i\geq \mu_i\geq \lambda_{n-m+i}$ for $i =1, \dots, m.$
\end{lemma}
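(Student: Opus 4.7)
The plan is to deduce the interlacing inequalities from the Courant--Fischer min--max characterization of the eigenvalues of a symmetric matrix. Since $B$ is a \emph{principal} submatrix of $A,$ there is an index set $I\subseteq\{1,\dots,n\}$ with $|I|=m$ such that $B$ is obtained from $A$ by retaining the rows and columns indexed by $I.$ Let $V\subseteq\mathbb{R}^n$ be the coordinate subspace spanned by the standard basis vectors $\{e_i:i\in I\},$ and identify $\mathbb{R}^m$ with $V$ via this basis. The crucial observation is that for every $x\in V,$ one has $x^{T}Ax=x^{T}Bx$ (reading $x$ on the right as its restriction to the coordinates in $I$), so the Rayleigh quotients $R_A(x)=x^{T}Ax/x^{T}x$ and $R_B(x)=x^{T}Bx/x^{T}x$ coincide on nonzero vectors of $V.$

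First, I would recall the two equivalent Courant--Fischer formulas
\begin{equation*}
\lambda_i(A)=\max_{\dim S=i}\ \min_{0\neq x\in S}R_A(x)=\min_{\dim S=n-i+1}\ \max_{0\neq x\in S}R_A(x),
\end{equation*}
and the analogous statements for $B$ with subspaces of $\mathbb{R}^{m}\cong V.$ To obtain the upper bound $\mu_i\le\lambda_i,$ I would use the max--min form applied to $B$: since every $i$-dimensional subspace of $V$ is also an $i$-dimensional subspace of $\mathbb{R}^n,$ and since $R_A$ and $R_B$ agree on $V,$ the maximum defining $\mu_i$ is taken over a subcollection of the subspaces appearing in the maximum defining $\lambda_i,$ so $\mu_i\le\lambda_i.$

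Next, to obtain the lower bound $\mu_i\ge\lambda_{n-m+i},$ I would use the min--max form. Setting $j=n-m+i$ gives $\lambda_j=\min_{\dim S=m-i+1}\max_{0\neq x\in S}R_A(x).$ On the other hand, $\mu_i=\min_{T\subseteq V,\,\dim T=m-i+1}\max_{0\neq x\in T}R_A(x),$ where again the agreement of Rayleigh quotients on $V$ was used. Restricting the minimum to subspaces contained in $V$ can only make it larger, so $\mu_i\ge\lambda_{n-m+i}.$ Combining the two inequalities yields $\lambda_{n-m+i}\le\mu_i\le\lambda_i$ for each $i=1,\dots,m,$ as claimed.

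The argument is essentially mechanical once Courant--Fischer is in hand; the only real point requiring care is keeping track of the directions of the inequalities when one passes to a smaller class of subspaces (max over a subcollection decreases, min over a subcollection increases) and correctly translating the index shift $j=n-m+i$ between the min--max formulas for $A$ and $B.$ An alternative route, if one prefers to avoid Courant--Fischer, is to induct on $n-m$: prove the case $m=n-1$ directly (using that eigenvalues of $B$ and $A$ are roots of real-rooted polynomials whose derivatives interlace, or via a short Cauchy-style computation with characteristic polynomials) and then iterate; this trades abstractness for a slightly longer chain of reductions.
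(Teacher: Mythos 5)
Your proof is correct: both applications of Courant--Fischer are set up properly, the agreement of the Rayleigh quotients of $A$ and $B$ on the coordinate subspace $V$ is the right bridge, and the index bookkeeping checks out (the subspaces of dimension $m-i+1$ used for $\mu_i$ in the min--max form match those of dimension $n-(n-m+i)+1=m-i+1$ used for $\lambda_{n-m+i}$). Note, however, that the paper offers no proof of this lemma at all; it is quoted as a known result from Brouwer and Haemers \cite{B11}, where the standard argument is essentially the one you give (phrased there via a dimension count on intersections of eigenspaces, which is the same mechanism as Courant--Fischer, and stated for the slightly more general situation $B=S^{T}AS$ with $S^{T}S=I$). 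So there is nothing in the paper to compare against; your argument is a complete and standard derivation of the cited result.
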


The next lemma is  \emph{equitable partition}. Consider a partition
$\mathcal{P} = \{V_1,\dots , V_m\}$ of the set $V = \{1, \dots , n\}.$ The characteristic matrix $\chi_{\mathcal{P}}$ of $\mathcal{P}$ is the
$n\times m$ matrix whose columns are the characteristic vectors of $V_1,\dots , V_m$. Consider a symmetric
matrix $A$ of order $n,$ with rows and columns are partitioned according to $\mathcal{P}$. The partition of $A$ is
\emph{equitable} if each submatrix $A_{i,j}$ is formed by the rows of $V_i$ and the columns of $V_j$ has constant
row sums $q_{i,j}$. The $m \times m$ matrix $Q = (q_{i,j})_{1\le i,j\le m}$ is called the \emph{quotient} matrix of $A$ with respect
to equitable partition $\mathcal{P}$.

\begin{lemma}\label{lem2.5}\cite[p. 30]{B11}
The matrix A has the following two kinds of eigenvectors and eigenvalues:

$(i)$ The eigenvectors in the column space of $\chi_{\mathcal{P}}$; the corresponding eigenvalues coincide
with the eigenvalues of $Q.$

$(ii)$ The eigenvectors orthogonal to the columns of $\chi_{\mathcal{P}}$; the corresponding eigenvalues of $A$
remain unchanged if some scalar multiple of the all-one block $J$ is added to block $A_{i,j}$
for each $i, j \in \{1,\dots,m\}.$

Furthermore,  if $A$ is nonnegative and irreducible, then
$$\lambda_1(A) = \lambda_1(Q).$$
\end{lemma}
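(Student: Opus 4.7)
The plan is to reduce both statements to a single structural identity relating $A$ and the quotient matrix $Q$. Because the partition $\mathcal{P}=\{V_1,\dots,V_m\}$ is equitable with row sums $q_{i,j}$, for any index $r\in V_i$ we have $(A\chi_{V_j})_r=\sum_{s\in V_j}A_{rs}=q_{i,j}$, so $A\chi_{V_j}=\sum_i q_{i,j}\chi_{V_i}$ for every $j$. Writing this column-by-column yields the matrix identity $A\chi_{\mathcal{P}}=\chi_{\mathcal{P}}Q$. Since the $\chi_{V_i}$ have disjoint supports they are linearly independent, so $\chi_{\mathcal{P}}$ has full column rank and $\operatorname{col}(\chi_{\mathcal{P}})$ is a genuine $A$-invariant subspace.

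For part $(i)$, take any eigenpair $(\mu,w)$ of $Q$. Then $A(\chi_{\mathcal{P}}w)=\chi_{\mathcal{P}}(Qw)=\mu\chi_{\mathcal{P}}w$ and $\chi_{\mathcal{P}}w\neq 0$ by the full-rank remark, so $\chi_{\mathcal{P}}w$ is an eigenvector of $A$ in $\operatorname{col}(\chi_{\mathcal{P}})$ with the same eigenvalue. Conversely, restricting $A$ to $\operatorname{col}(\chi_{\mathcal{P}})$ is represented by $Q$ in the basis $\{\chi_{V_1},\dots,\chi_{V_m}\}$, which shows that the eigenvalues arising from eigenvectors in this column space are exactly the eigenvalues of $Q$. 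For part $(ii)$, note that $v\perp\operatorname{col}(\chi_{\mathcal{P}})$ if and only if $\sum_{s\in V_j}v_s=0$ for every $j$. For such $v$, replacing the block $A_{i,j}$ by $A_{i,j}+c_{i,j}J$ contributes, in coordinate $r\in V_i$, the quantity $c_{i,j}\sum_{s\in V_j}v_s=0$. Hence the action of $A$ on the orthogonal complement is invariant under the addition of arbitrary scalar multiples of the all-ones block in each $A_{i,j}$.

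For the furthermore part, $(i)$ already gives $\lambda_1(Q)\le\lambda_1(A)$. When $A$ is nonnegative, the entries $q_{i,j}$ are sums of nonnegative numbers, so $Q\ge 0$; when $A$ is irreducible, its digraph of nonzero entries is strongly connected, and this projects onto the digraph of $Q$ (an arc $i\to j$ in $Q$ exists whenever some vertex of $V_i$ has a neighbor in $V_j$), so $Q$ is irreducible as well. Applying Perron--Frobenius to $Q$ produces a strictly positive eigenvector $w$ with eigenvalue $\lambda_1(Q)$, and then $\chi_{\mathcal{P}}w$ is a strictly positive eigenvector of $A$ with the same eigenvalue. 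Perron--Frobenius applied to $A$ forces any positive eigenvector to correspond to $\lambda_1(A)$, yielding $\lambda_1(A)=\lambda_1(Q)$. The only mildly delicate step is verifying that irreducibility transfers from $A$ to $Q$; everything else is a direct consequence of the single identity $A\chi_{\mathcal{P}}=\chi_{\mathcal{P}}Q$.
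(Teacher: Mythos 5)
The paper gives no proof of this lemma; it is quoted directly from Brouwer--Haemers \cite[p.~30]{B11}, and your argument is precisely the standard one found there, built on the single identity $A\chi_{\mathcal{P}}=\chi_{\mathcal{P}}Q$. Your proof is correct, including the two points that actually need care (the orthogonal complement is $A$-invariant because $A$ is symmetric and $\operatorname{col}(\chi_{\mathcal{P}})$ is invariant, and irreducibility passes from $A$ to $Q$ because any arc of the digraph of $A$ projects to an arc of the digraph of $Q$), so nothing further is needed.
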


By Perron–Frobenius theorem, we can easily obtain the following.
\begin{lemma}\label{l3.3}
Let $G$ be a connected graph,  $u$ and $v$ be two non-adjacent vertices in $G$. Then 
$\lambda_1(G)<\lambda_1(G+uv).$  
\end{lemma}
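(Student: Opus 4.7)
The plan is to deduce the strict inequality directly from the Perron--Frobenius theorem together with the variational characterization of the largest eigenvalue of a symmetric matrix. Since $G$ is connected, its adjacency matrix $A(G)$ is nonnegative and irreducible, so by Perron--Frobenius there exists an eigenvector $x$ associated with $\lambda_1(G)$ whose entries are all strictly positive; normalize it so that $x^\top x=1$. This positivity of every component of $x$ is the crucial ingredient.

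Next I would compare $A(G+uv)$ and $A(G)$ via the Rayleigh quotient. Writing $A(G+uv)=A(G)+E_{uv}$, where $E_{uv}$ is the symmetric $0/1$ matrix supported on the two entries $(u,v)$ and $(v,u)$, one has
\begin{equation*}
\lambda_1(G+uv)\;\ge\; x^\top A(G+uv)\,x \;=\; x^\top A(G)\,x + 2x_u x_v \;=\; \lambda_1(G)+2x_u x_v.
\end{equation*}
Because $x_u>0$ and $x_v>0$, the extra term $2x_ux_v$ is strictly positive, and the desired strict inequality $\lambda_1(G)<\lambda_1(G+uv)$ follows at once.

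There is essentially no obstacle: the argument is a one-line Rayleigh-quotient bound, and the only nontrivial input is the strict positivity of the Perron vector of a connected (hence irreducible) nonnegative matrix. If one prefers, the same conclusion can be phrased by noting that $A(G+uv)-A(G)$ is a nonzero positive semidefinite rank-one-like perturbation in the entries $(u,v),(v,u)$, and combining the Collatz--Wielandt characterization with the irreducibility of $A(G+uv)$ (which inherits connectedness from $G$) yields the strict monotonicity of the Perron root under the addition of an edge.
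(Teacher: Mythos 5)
Your proof is correct and matches the paper's intent: the paper simply asserts the lemma ``by Perron--Frobenius,'' and your Rayleigh-quotient argument using the strict positivity of the Perron vector of the connected graph $G$ is precisely the standard way to fill in that one-line justification.
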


The clique number of a graph $G$, denoted by $\omega(G)$, is the maximum order of a complete subgraph of the graph.
\begin{lemma}\cite{N02}\label{3.4}
Let $G$ be a graph with $e(G)$ edges and clique number $\omega(G)$. Then $$\lambda_1(G)\le \sqrt{2e(G)\frac{\omega(G)-1}{\omega(G)}}.$$
\end{lemma}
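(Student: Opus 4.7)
The plan is to invoke the classical approach using the Motzkin--Straus theorem together with a single Cauchy--Schwarz step. Let $\mathbf{y}=(y_1,\dots,y_n)^T$ be a nonnegative Perron eigenvector of $A(G)$ associated with $\lambda_1=\lambda_1(G)$, normalized so that $\sum_i y_i^2=1$. Then the variational characterization of $\lambda_1$ gives the identity
\[
\lambda_1 \;=\; \mathbf{y}^{T}A(G)\mathbf{y} \;=\; 2\sum_{ij\in E(G)} y_i y_j.
\]
This is the starting point from which I would read off the two quantities we need to control: the number of summands, which is $e(G)$, and the ``mass'' of the products $y_i y_j$ across edges.

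Next I would apply the Cauchy--Schwarz inequality to the $e(G)$ positive terms $y_i y_j$ viewed as a vector indexed by edges:
\[
2\sum_{ij\in E(G)} y_i y_j \;\le\; 2\sqrt{e(G)}\,\sqrt{\sum_{ij\in E(G)} y_i^2 y_j^2}.
\]
To bound the inner sum, set $z_i := y_i^2$, so that $z_i\ge 0$ and $\sum_i z_i=\sum_i y_i^2=1$. Now invoke the Motzkin--Straus theorem, which states that for any graph $G$ with clique number $\omega(G)$,
\[
\max\Big\{ 2\sum_{ij\in E(G)} z_i z_j \; :\; z_i\ge 0,\;\sum_i z_i=1\Big\} \;=\; 1-\frac{1}{\omega(G)}.
\]
Applied to our $z_i$, this yields $\sum_{ij\in E(G)} y_i^2 y_j^2 \le \tfrac{1}{2}\bigl(1-1/\omega(G)\bigr) = \tfrac{\omega(G)-1}{2\,\omega(G)}$.

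Combining the two displays gives
\[
\lambda_1(G) \;\le\; 2\sqrt{e(G)}\cdot \sqrt{\frac{\omega(G)-1}{2\,\omega(G)}} \;=\; \sqrt{2e(G)\cdot \frac{\omega(G)-1}{\omega(G)}},
\]
which is precisely the stated inequality. The only nontrivial ingredient is the Motzkin--Straus theorem; everything else is Cauchy--Schwarz and the Rayleigh identity. The main obstacle, and the step I would cite rather than prove in detail, is the Motzkin--Straus equality, since it is the deep combinatorial input that translates clique number into an analytic bound on a quadratic form over the simplex. Equality analysis (worth noting for completeness) would require both Cauchy--Schwarz equality, forcing $y_i y_j$ to be constant on edges, and equality in Motzkin--Straus, which is attained when the support of $\mathbf{y}$ is a maximum clique on which $\mathbf{y}$ is constant.
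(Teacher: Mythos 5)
Your argument is correct, and since the paper states this lemma only as a citation to Nikiforov's work \cite{N02} without reproducing a proof, there is no in-paper argument to diverge from; your route (Rayleigh identity, Cauchy--Schwarz over the edge set, and the Motzkin--Straus bound $\sum_{ij\in E}y_i^2y_j^2\le\tfrac{\omega-1}{2\omega}$) is in fact essentially Nikiforov's original proof. The computation $2\sqrt{e(G)}\cdot\sqrt{\tfrac{\omega-1}{2\omega}}=\sqrt{2e(G)\tfrac{\omega-1}{\omega}}$ checks out, so nothing further is needed.
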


Sun, Liu and Lan \cite{S22} extended Lemma \ref{3.4} to signed graphs.
Let $\omega_b$ be the balanced clique number of $\dot{G}$, which is the maximum order of a balanced complete subgraph of $\dot{G}$.

\begin{lemma}\label{lem3.5}\cite[Theorem 1.3]{S22}
For a signed graph $\dot{G}$ with $e(\dot{G})$ edges and balanced clique number $\omega_b$, then
$$\lambda_1(\dot{G})\le \sqrt{2e(\dot{G})\frac{\omega_b-1}{\omega_b}}.$$
\end{lemma}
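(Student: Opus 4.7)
The plan is to mirror Nikiforov's proof of Lemma~\ref{3.4} in the signed setting, using the switching operation to eliminate the contribution of the negative edges. First, I would take a unit eigenvector $x$ with $A_\sigma x=\lambda_1(\dot{G})x$ and, appealing to the switching remark recorded in the introduction, replace $\dot{G}$ by a switching-equivalent signed graph under which $x\ge 0$ componentwise. Since switching preserves both the spectrum and the balanced clique number $\omega_b$, the whole argument can be carried out in this switched frame without loss of generality.

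Writing $E^+$ and $E^-$ for the positive and negative edges of the resulting signature, the nonnegativity of $x$ yields
\begin{equation*}
\lambda_1(\dot{G}) \;=\; x^{\top}A_\sigma x \;=\; 2\!\sum_{ij\in E^+}\!x_ix_j \;-\; 2\!\sum_{ij\in E^-}\!x_ix_j \;\le\; 2\!\sum_{ij\in E^+}\!x_ix_j,
\end{equation*}
and Cauchy--Schwarz then gives $\lambda_1(\dot{G})^2\le 4|E^+|\sum_{ij\in E^+}x_i^2x_j^2$. Setting $y_i:=x_i^2$ one has $\sum_i y_i=1$, so the Motzkin--Straus inequality applied to the (unsigned) positive subgraph $G^+=(V,E^+)$ gives $\sum_{ij\in E^+}y_iy_j\le \tfrac12\bigl(1-1/\omega(G^+)\bigr)$.

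The crucial observation is that $\omega(G^+)\le \omega_b(\dot{G})$: every clique of $G^+$ consists of vertices pairwise joined by positive edges of the current signature, so it induces an all-positive, hence balanced, complete subgraph of $\dot{G}$. Combining this with $|E^+|\le e(\dot{G})$ produces
\begin{equation*}
\lambda_1(\dot{G})^2 \;\le\; 2\,e(\dot{G})\cdot\frac{\omega_b-1}{\omega_b},
\end{equation*}
and taking square roots yields precisely the bound in the lemma.

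The main obstacle is to see why the correct constant is $(\omega_b-1)/\omega_b$ rather than the weaker $(\omega(G)-1)/\omega(G)$ one would obtain from the naive chain $\lambda_1(\dot{G})\le \rho(G)$ followed by Lemma~\ref{3.4}; indeed $\omega(G)$ may strictly exceed $\omega_b$. The leverage comes from switching the eigenvector to be nonnegative: the negative edges then fall out of the quadratic form at no cost, and Motzkin--Straus can be applied to $G^+$, whose cliques are forced to be balanced cliques of $\dot{G}$.
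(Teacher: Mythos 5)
Your proof is correct. Note that the paper itself gives no proof of this lemma---it is imported directly from \cite{S22}---and your argument is essentially the one in that reference: switch so that a unit $\lambda_1$-eigenvector is entrywise nonnegative, drop the negative edges from the Rayleigh quotient, and run Nikiforov's Cauchy--Schwarz plus Motzkin--Straus argument on the positive part $G^+$, whose cliques are all-positive and hence balanced cliques of $\dot{G}$. Your closing observation is also the right one: the whole point of switching the eigenvector nonnegative is that one gets $\omega_b$ rather than the weaker $\omega(G)$ that the chain $\lambda_1(\dot{G})\le\rho(G)$ combined with Lemma~\ref{3.4} would produce.
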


The following  lemma gives an upper bound of  the largest eigenvalue $\lambda_1({\dot{G}})$ of a signed graph $\dot{G},$ which is derived by Stani\'{c} \cite{S19}.

\begin{lemma}\cite[Theorem 3.1]{S19}\label{l3.2}
Every signed graph $\dot{G}$ contains a balanced spanning subgraph, say $\dot{H}$, which satisfies $\lambda_1({\dot{G}}) \le  \lambda_1(\dot{H})$.
\end{lemma}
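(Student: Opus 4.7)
The plan is a short Rayleigh-quotient argument built on the switching trick flagged in the introduction (just before Lemma \ref{lemma2.1}). First I would take a unit eigenvector $x$ of $A_\sigma$ for the eigenvalue $\lambda_1(\dot{G})$. As recorded in the introduction, for any such eigenvector there is a vertex set $U$ with $S_U x \ge 0$ componentwise, and because $A_\sigma = S_U A_{\sigma_U} S_U$, the signature similarity preserves the whole spectrum. Moreover, a spanning signed subgraph of $\dot{G}_U$ is balanced if and only if its inherited counterpart in $\dot{G}$ is balanced (balance is a property of cycle signs, which are invariant under switching), and the two have identical spectra. Hence it suffices to prove the conclusion for $\dot{G}_U$ with the nonnegative eigenvector $S_U x$, and pull the balanced subgraph back along $S_U$ at the end. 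After relabeling, I may assume $\dot{G} = \dot{G}_U$ and $x \ge 0$.

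Second, partition $E(\dot{G}) = E^+ \sqcup E^-$ according to sign, and let $\dot{H}$ be the spanning subgraph whose edge set is exactly $E^+$ (retaining the positive signs). Then $\dot{H}$ is all-positive, so every cycle in it has sign $+1$, hence $\dot{H}$ is automatically balanced. The only computation is a comparison of Rayleigh quotients at the eigenvector $x$:
$$\lambda_1(\dot{G}) = x^{T} A_\sigma x = 2\sum_{uv\in E^+} x_u x_v \;-\; 2\sum_{uv\in E^-} x_u x_v \;\le\; 2\sum_{uv\in E^+} x_u x_v = x^{T} A_{\dot{H}} x,$$
where the inequality uses $x_u x_v \ge 0$ for every pair of vertices because both factors are nonnegative. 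By the variational characterization of the largest eigenvalue of the symmetric matrix $A_{\dot{H}}$, one has $x^{T} A_{\dot{H}} x \le \lambda_1(\dot{H}) \cdot x^{T} x = \lambda_1(\dot{H})$, and the desired inequality $\lambda_1(\dot{G}) \le \lambda_1(\dot{H})$ follows at once.

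There is no genuine obstacle in this argument; the whole proof is the observation that deleting negative-signed edges against a nonnegative eigenvector can only increase the Rayleigh quotient, while producing an all-positive (therefore balanced) spanning subgraph. The one conceptual ingredient is the switching reduction to a nonnegative eigenvector, which is exactly the viewpoint stressed in the introduction and is entirely spectrum-preserving. The statement is independent of the $C_3^-$-free hypothesis, which is why it is cleanly extractable as a standalone lemma; it will be invoked downstream in the proof of Theorem \ref{thm2} to transfer spectral control of $\dot{G}$ to spectral control of an ordinary graph and thereby trigger Lemma \ref{lem3.5}.
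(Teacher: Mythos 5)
Your proof is correct and is essentially the same argument the paper relies on: the lemma is quoted from \cite[Theorem~3.1]{S19} rather than reproved, but Remark~\ref{r3.7} records that the balanced spanning subgraph there is obtained exactly as you construct it, namely by switching so that the $\lambda_1$-eigenvector becomes nonnegative and then deleting all negative edges. Your Rayleigh-quotient comparison is the standard justification of that step, so there is nothing to add.
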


\begin{remark}\label{r3.7}
Let $\textbf{x} = (x_1, x_2, \dots , x_n)^T$ be a unit eigenvector associated with $\lambda_1({\dot{G}}).$ Then there is a switching equivalent graph $\dot{G}_U$ of $\dot{G}$ such that 
 the coordinates of $\textbf{x}$ of $\dot{G}_U$ are non-negative. See \cite[Fact 2]{S19}.
By  the proof of \cite[Theorem 3.1]{S19}, the balanced spanning subgraph $\dot{H}$ in Lemma \ref{l3.2} is obtained  from $\dot{G}_U$ by removing all negative edges.
\end{remark}

Let $\textbf{0}_{m,n}$ (or $\textbf{0}$) denote the all-zeros matrix,  $j_n$ be an all-ones vector of order $n$ and $J_{m,n}$ (or $J$) denote the all-ones matrix.
We now give a descending order about the spectral radius of  signed  graphs  $\dot{G}^{s,t}$ ($s+t=n-2$).
Because of the symmetry, we assume that $t\ge s.$
\begin{lemma}\label{l3.6}
 $\lambda_1(\dot{G}^{1,n-3})=\frac{1}{2}( \sqrt{ n^2-8}+n-4)>\lambda_1(\dot{G}^{2,n-4})>\dots>\lambda_1(\dot{G}^{\lfloor\frac{n-2}{2}\rfloor,\lceil\frac{n-2}{2}\rceil})$.

\end{lemma}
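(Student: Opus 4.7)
The plan is to reduce the eigenvalue question to an explicit $4\times 4$ quotient matrix and then exploit a clean dependence of its characteristic polynomial on the single parameter $\sigma:=st$. The partition $\mathcal{P}=\{\{u\},\{u_1,\dots,u_s\},\{v_1,\dots,v_t\},\{v\}\}$ is equitable for $\dot{G}^{s,t}$, with quotient matrix
\[
Q(s,t)=\begin{pmatrix} 0 & s & 0 & -1 \\ 1 & s-1 & t & 0 \\ 0 & s & t-1 & 1 \\ -1 & 0 & t & 0 \end{pmatrix}.
\]
By Lemma \ref{lem2.5}(i), every eigenvalue of $Q(s,t)$ is an eigenvalue of $A_\sigma(\dot{G}^{s,t})$. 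The remaining $n-4$ eigenvectors are zero-sum functions supported on one of the classes $\{u_i\}$ or $\{v_j\}$; since the internal blocks are copies of $J-I$, each such vector yields eigenvalue $-1$. As $\mathrm{tr}\,Q(s,t)=n-4\ge 1$ for $n\ge 5$, $Q(s,t)$ has a positive eigenvalue, so $\lambda_1(\dot{G}^{s,t})=\lambda_1(Q(s,t))>0$.

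A cofactor expansion of $\det(\lambda I-Q(s,t))$, simplified using $s+t=n-2$, gives the key identity
\[
p(\lambda;\sigma):=\det(\lambda I-Q(s,t))=\lambda^{4}-(n-4)\lambda^{3}-2(n-2)\lambda^{2}+2(\sigma-1)\lambda+(3\sigma+n-3),
\]
in which the $(s,t)$-dependence collapses to the single parameter $\sigma=st$. Specializing to $s=1$, $t=n-3$ (so $\sigma=n-3$) yields the factorization
\[
p(\lambda;n-3)=(\lambda^{2}-2)\bigl(\lambda^{2}-(n-4)\lambda-2(n-3)\bigr),
\]
whose largest root is $\tfrac{1}{2}(\sqrt{n^2-8}+n-4)$, confirming the stated value of $\lambda_1(\dot{G}^{1,n-3})$.

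For the strict ordering, write $p(\lambda;\sigma)=f(\lambda)+\sigma g(\lambda)$ with $g(\lambda)=2\lambda+3>0$ for $\lambda>0$. Fix $\sigma_1<\sigma_2$ and let $\mu_i$ denote the largest real root of $p(\,\cdot\,;\sigma_i)$. Since $p(\,\cdot\,;\sigma_1)$ is monic of even degree with largest real root $\mu_1>0$, we have $p(\lambda;\sigma_1)\ge 0$ for every $\lambda\ge\mu_1$, whence
\[
p(\lambda;\sigma_2)=p(\lambda;\sigma_1)+(\sigma_2-\sigma_1)g(\lambda)>0\quad\text{for all }\lambda\ge\mu_1,
\]
forcing $\mu_2<\mu_1$. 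Because $\sigma(s)=s(n-2-s)$ is strictly increasing on $1\le s\le\lfloor(n-2)/2\rfloor$, iterating this comparison across consecutive values of $s$ yields the asserted chain of strict inequalities.

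The main obstacle I foresee is the characteristic-polynomial computation itself; once the $\sigma$-dependence is isolated inside the manifestly positive factor $g(\lambda)=2\lambda+3$, the monotonicity reduces to the one-line sign comparison above, and the explicit value of $\lambda_1(\dot{G}^{1,n-3})$ drops out of the fortunate factorization at $\sigma=n-3$.
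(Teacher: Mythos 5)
Your proof is correct and follows essentially the same route as the paper: an equitable partition reducing to a $4\times4$ quotient matrix, the factorization of its characteristic polynomial at $(s,t)=(1,n-3)$, and a comparison of characteristic polynomials to get the strict chain. Your reparametrization by $\sigma=st$ with the positive factor $g(\lambda)=2\lambda+3$ is exactly the paper's computation $f(x,s-1,t+1)-f(x,s,t)=(2x+3)(s-t-1)$ in disguise, and your direct identification of the remaining $n-4$ eigenvalues as $-1$ (plus the trace argument for $\lambda_1(Q)>0$) is a clean, equivalent substitute for the paper's $J$-shift argument.
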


\begin{proof}
The adjacency matrix $A_\sigma$ of $\dot{G}^{s,t}$ and its quotient matrix $Q^{s,t}$ are as following
\begin{gather*}
A_\sigma=
\begin{bmatrix}
0      & -1 &j_s& \textbf{0}    \\
-1 & 0& \textbf{0}&j_t \\
j_s^T & \textbf{0}&  (J-I)_s&J \\
\textbf{0}    &j_t^T &J&(J-I)_{t}
\end{bmatrix}~\text{and}~
Q^{s,t}=
\begin{bmatrix}
0      & -1 & s& 0     \\
-1 & 0&0&t \\
1 & 0& s-1&t \\
0     &1 & s&t-1
\end{bmatrix}, respectively.
\end{gather*}

By Lemma \ref{lem2.5}, then the eigenvalues of $Q^{s,t}$ are also the eigenvalues of $A_\sigma$
and the other eigenvalues of $A_\sigma$ remain the same if we add some scalar multiple of  $J$ from the blocks equal to $-1,$ $j_s,$ $j_t,$ $J$ or $J-I.$
Then  $A_{\sigma}$ and $Q^{s,t}$ become
\begin{gather*}
A^\prime=
\begin{bmatrix}
0      & 0 &\textbf{0}& \textbf{0}    \\
0 & 0& \textbf{0}&\textbf{0} \\
\textbf{0} & \textbf{0}&  -I_s&\textbf{0} \\
\textbf{0}    &\textbf{0}&\textbf{0}&-I_{t}
\end{bmatrix}~\text{and}~
Q^\prime=
\begin{bmatrix}
0      & 0 &0& 0    \\
0      & 0 &0& 0    \\
0      & 0 &-1& 0    \\
0      & 0 &0& -1
\end{bmatrix}, respectively.
\end{gather*}
The part of the spectrum of $A^\prime,$ which is not in the spectrum of $Q^\prime$ is $\{-1^{s+t-2}\}$. Thus the eigenvalues of $\dot{G}^{s,t}$ 
are the eigenvalues of $Q^{s,t}$ and  $-1$ with multiplicity $s+t-2.$
Therefore,
 $\lambda_1(\dot{G}^{s,t})=\lambda_1(Q^{s,t}).$
 
   By direct calculation, 
the characteristic polynomial of the matrix $Q^{s,t}$   is $$f(x,s,t)=x^4+ (-s-t+2)x^3+ (-2 s-2 t)x^2+ (2 s t-2)x+3 s t+s+t-1.$$
Then  $\lambda_1(Q^{1,n-3})$ is the largest root of $f(x,1,n-3)=(x^2-2) (6 - 2 n + (4 - n) x + x^2)=0.$
So,
 $$\lambda_1(\dot{G}^{1,n-3})=\lambda_1(Q^{1,n-3})=\frac{1}{2}( \sqrt{ n^2-8}+n-4).$$
 
Note that $$f(x,s-1,t+1)-f(x,s,t)=(2 x+3) (s-t-1).$$

As  $t\ge  s,$
then    $f(x,s-1,t+1)-f(x,s,t)< 0$ when $x\ge 0.$ 
Since $\lambda_1(\dot{G}^{s,t})$ is the largest root of $f(x,s,t)=0,$ it follows that 
$$\lambda_1(\dot{G}^{s-1,t+1})>\lambda_1(\dot{G}^{s,t}).$$ 
Hence,
 $\lambda_1(\dot{G}^{1,n-3})>\lambda_1(\dot{G}^{2,n-4})>\dots>\lambda_1(\dot{G}^{\lfloor\frac{n-2}{2}\rfloor,\lceil\frac{n-2}{2}\rceil})$.
\end{proof}
\begin{remark}\label{r3.9}
$\lambda_1(\dot{G}^{1,n-3})=\frac{1}{2}( \sqrt{ n^2-8}+n-4)>\frac{1}{2}(n-1+n-4)=n-\frac{5}{2}$ for $n\ge 7.$
\end{remark}

Before giving the proof of Theorem \ref{thm2}, we need the following three lemmas.

Let $H^{s,t}_1$ ($s+t=n-2$) be the underlying graph of  $\dot{G}^{s,t}-u_1v_1$.

\begin{lemma}\label{l3.7}
$ \lambda_1(H^{s,t}_1)<\lambda_1(\dot{G}^{1,n-3})$.
\end{lemma}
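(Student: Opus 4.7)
The plan is to compute $\lambda_1(H^{s,t}_1)$ via an equitable partition and compare it with $\lambda := \lambda_1(\dot{G}^{1,n-3}) = \frac{1}{2}(\sqrt{n^2-8}+n-4)$. A useful algebraic fact extractable from the proof of Lemma \ref{l3.6} is that $\lambda$ satisfies the quadratic $\lambda^2 = (n-4)\lambda + 2(n-3)$: examining the Perron eigenvector of $\dot{G}^{1,n-3}$ shows that its component at $u$ must vanish, reducing its defining eigenvalue equations to exactly this quadratic.

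First I would set up the equitable partition of $H^{s,t}_1$ with cells $\{u\},\{v\},\{u_1\},\{v_1\},\{u_2,\dots,u_s\},\{v_2,\dots,v_t\}$ (the last two shrinking or disappearing when $s=1$ or $t=1$) and write down the associated $6\times 6$ quotient matrix $Q$. Since $H^{s,t}_1$ is connected for $n\ge 7$ and its adjacency matrix is nonnegative and irreducible, Lemma \ref{lem2.5} gives $\lambda_1(H^{s,t}_1) = \lambda_1(Q)$. The idea is then to evaluate $p(\lambda) := \det(\lambda I - Q)$ and show it is positive, which forces $\lambda > \lambda_1(Q) = \lambda_1(H^{s,t}_1)$. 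Writing $p(\lambda)$ as a polynomial in $\lambda$ of degree at most six and repeatedly substituting $\lambda^2 = (n-4)\lambda + 2(n-3)$, one reduces $p(\lambda)$ to a linear expression $A(s,t,n)\lambda + B(s,t,n)$; the target becomes $A\lambda + B > 0$ for all admissible $s,t$ and $n\ge 7$.

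In the symmetric case $s=t$ (which requires $n$ even) the automorphism $u\leftrightarrow v$, $u_1\leftrightarrow v_1$, $\{u_i\}\leftrightarrow \{v_j\}$ lets one assume the Perron eigenvector has only three distinct entries $a,c,e$. The eigenvalue equations then become a $3\times 3$ linear system whose determinant gives a cubic $p_s(\mu) = \mu^3 - (n-4)\mu^2 - \tfrac{3n-8}{2}\mu + (n-5)$. Substituting $\mu = \lambda$ and applying the quadratic identity twice reduces this to $p_s(\lambda) = \tfrac{1}{2}\bigl((n-4)\lambda + 2(n-5)\bigr)$, which is strictly positive for $n\ge 7$. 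This settles the symmetric case and serves as a template for the general one.

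The main obstacle is the general case $s\ne t$, where no such symmetry is available. My preferred approach is a monotonicity argument in the spirit of Lemma \ref{l3.6}: showing that $\lambda_1(H^{s,t}_1)$ is monotone in $s$ at fixed $s+t$, so the extremum over $(s,t)$ is attained at a canonical configuration (either $s=1$ or $s=\lfloor (n-2)/2\rfloor$) that can be handled as above. Establishing this monotonicity, either by comparing Perron eigenvectors under a single-vertex swap or by tracking the sign of the derivative of the characteristic polynomial in the parameter $s$, is the delicate step. If monotonicity proves elusive, the backup plan is a bookkept brute-force expansion of the $6\times 6$ determinant $p(\lambda)$, grouping terms so that after the substitution $\lambda^2 = (n-4)\lambda + 2(n-3)$ the coefficients $A$ and $B$ are manifestly positive.
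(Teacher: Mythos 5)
Your overall strategy---equitable partition, quotient matrix, evaluate the characteristic polynomial at $\lambda:=\lambda_1(\dot{G}^{1,n-3})$, and a monotonicity argument in $s$ at fixed $s+t$---is essentially the paper's, and your preparatory algebra is correct: $\lambda$ does satisfy $\lambda^2=(n-4)\lambda+2(n-3)$ (the quadratic factor of $f(x,1,n-3)$, and the $u$-component of the corresponding eigenvector does vanish), and your symmetric-case cubic $p_s(\mu)=\mu^3-(n-4)\mu^2-\tfrac{3n-8}{2}\mu+(n-5)$ and its reduction to $\tfrac{1}{2}\bigl((n-4)\lambda+2(n-5)\bigr)$ both check out. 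However, there is a genuine logical gap at the heart of the argument: the inference ``$p(\lambda)>0$ forces $\lambda>\lambda_1(Q)$'' is false. The characteristic polynomial of a real symmetric matrix is positive not only on $(\lambda_1,\infty)$ but also on $(\lambda_3,\lambda_2)$, $(\lambda_5,\lambda_4)$, etc., so positivity at $\lambda$ proves nothing unless you also show $\lambda>\lambda_2(Q)$. This is exactly the step the paper supplies and you omit: it bounds $\lambda_2(H_1^{s,t})$ from above by deleting a vertex (Cauchy interlacing, Lemma \ref{Lem2.5}) and applying Nikiforov's bound $\lambda_1\le\sqrt{2e\,(\omega-1)/\omega}$ (Lemma \ref{3.4}) to get $\lambda_2\le\sqrt{(n-2)(n-4)}<n-3<\lambda$. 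Without some such control on the second eigenvalue, your sign computation---including the completed symmetric case---does not yield the conclusion.

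The second issue is that the case you actually carry out is the wrong extreme. The monotonicity that holds (and that the paper proves via $f_1(x,s-1,t+1)-f_1(x,s,t)=(s-1-t)x(2x^2+3x-2)<0$ for $x\ge1$, $s\le t$) says $\lambda_1(H_1^{s,t})$ \emph{increases} as the split becomes more unbalanced; the maximum over $s\ge2$ is at $s=2$, and $s=1$ must be treated separately because the cell $N(u)\setminus\{u_1\}$ disappears and the quotient is $5\times5$. So the binding cases are $s=1$ and $s=2$, where $\lambda_1(H_1^{s,t})$ is closest to $\lambda$; the symmetric case $s=t$ is the minimum and hence the case with the most slack. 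Your hedge that the extremum is at ``either $s=1$ or $s=\lfloor(n-2)/2\rfloor$'' leaves open the possibility of checking only the symmetric end, which would not suffice. To complete the proof you would need to (i) establish the monotonicity (your difference-of-characteristic-polynomials idea does work and is what the paper does), (ii) evaluate $f_1$ at $\lambda$ for the extremal cases $s=1$ and $s=2$ and verify positivity for $n\ge7$, and (iii) in each of those cases separately bound the second eigenvalue below $\lambda$.
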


\begin{proof}
Because of the symmetry, we assume that $t\ge s.$

 If $s=1,$  giving a proper vertex partition $V(H^{1,n-3}_1)=\{u\}\cup \{v\}\cup \{u_1\}\cup \{v_1\} \cup N(v)\setminus\{v_1\},$ then 
  the adjacency matrix $A_1$ of graph $H^{1,n-3}_1$ and its quotient matrix $Q_1$ are
\begin{gather*}
A_1=
\begin{bmatrix}
0      & 1& 1 &   0&\textbf{0}\\
1 &0&0&1  &j_{n-4}\\
1 &0&0&0  &j_{n-4}\\
0&1  &0&0 &j_{n-4}\\
\textbf{0}    &j_{n-4}^T&j_{n-4}^T&j_{n-4}^T &(J-I)_{n-4} 
\end{bmatrix}~\text{and}~
Q_{1}=
\begin{bmatrix}
0      & 1& 1 &   0&0\\
1 &0&0&1  &n-4\\
1 &0&0&0  &n-4\\
0&1  &0&0 &n-4\\
0   &1&1&1 &n-5 
\end{bmatrix}.
\end{gather*}
By Lemma \ref{lem2.5}, we have $\lambda_1(H_1^{1,n-3})=\lambda_1(Q_1).$  By direct calculations,
 the characteristic polynomial of the matrix $Q_1$   
is
$$f_1(x)=  x^5+ (n-5) x^4+ (3 n-9) x^3+ (n-7) x^2+ (3 n-11) x+n-5,$$
and
 \begin{equation}\label{eq2}
\begin{split}f_{1}(\lambda_1(\dot{G}^{1,n-3}))&=-3 + \frac{33}{2}n - 8 n^2 + n^3 + \sqrt{n^2-8} ( \frac{31}{2} - 8 n + n^2)>0~~\text{for $n\ge 7$}.\end{split}
\end{equation}
 By Lemmas \ref{Lem2.5}, \ref{3.4} and Remark \ref{r3.9}, we have
 \begin{equation*}
 \begin{split}
 \lambda_2(H_1^{1,n-3})&\le \lambda_1(H_1^{1,n-3}-v)\le \sqrt{2e(H_1^{1,n-3}-v)\frac{\omega(H_1^{1,n-3}-v)-1}{\omega(H_1^{1,n-3}-v)}}\\ &=\sqrt{(n-2)(n-4)}< n-3<\lambda_1(\dot{G}^{1,n-3}).\end{split}
  \end{equation*} Thus from Eq. (\ref{eq2}), we have $\lambda_1(H_1^{1,n-3})<\lambda_1(\dot{G}^{1,n-3})$. 
  
  If $s\ge 2,$ 
giving a proper vertex partition $V(H^{s,t}_1)=\{u\}\cup \{v\}\cup \{u_1\}\cup \{v_1\} \cup N(u)\setminus\{u_1\}\cup N(v)\setminus\{v_1\}$, then
the adjacency matrix $A_1$ of graph $H^{s,t}_1$ and its quotient matrix $Q_1$ are
\begin{gather*}
A_1=
\begin{bmatrix}
0      & 1 &1& 0 &   j_{s-1}&\textbf{0}\\
1 & 0& 0&1 &\textbf{0} & j_{t-1}\\
1 &0&  0&0&j_{s-1}  &j_{t-1}\\
0 &1&  0&0&j_{s-1}  &j_{t-1}\\
j_{s-1}^T&\textbf{0}  &1&1&(J-I)_{s-1} &J\\
\textbf{0}    &j_{t-1}^T&1&1&J &(J-I)_{t-1} 
\end{bmatrix}~\text{and}~
Q_1=
\begin{bmatrix}
0      & 1 &1& 0 &   s-1&0\\
1 & 0& 0&1 &0 & t-1\\
1 &0&  0&0&s-1  &t-1\\
0 &1&  0&0&s-1  &t-1\\
1&0  &1&1&s-2 &t-1\\
0    &1&1&1&s-1 &t-2 
\end{bmatrix}.
\end{gather*}
By Lemma \ref{lem2.5}, we have $\lambda_1(H_1^{s,t})=\lambda_1(Q_1).$
Then the characteristic polynomial of the matrix $Q_1$   
is
$f_1(x,s,t)=x^6+ (4 -
     s - t) x^5+ (6 - 4 s - 4 t) x^4 + (2 s t-4 s - 4 t ) x^3+( 3st+ s + t-7) x^2+ (-4 + 4 s + 4 t - 2 s t) x+3-s-t.$

Note that $$f_1(x,s-1,t+1)-f_1(x,s,t)= (s-1-t)x(2 x^2+3x-2).$$
Since  $s\le t,$
then    $f_1(x,s-1,t+1)-f_1(x,s,t)< 0$ when $x\ge 1.$ 
Since $\lambda_1(H^{s,t}_1)$ is the largest root of $f_1(x,s,t)=0,$ it follows that 
$$\lambda_1(H_1^{s-1,t+1})>\lambda_1(H^{s,t}_1).$$ 
Hence,
 $\lambda_1(H_1^{2,n-4})>\dots>\lambda_1(H_1^{\lfloor\frac{n-2}{2}\rfloor,\lceil\frac{n-2}{2}\rceil}).$
 Lastly we prove that $\lambda_1(H_1^{2,n-4})<\lambda_1(\dot{G}^{1,n-3}).$
  By direct calculation, we have 
 \begin{equation}\label{eq3}
\begin{split}
&f_{1}(\lambda_1(\dot{G}^{1,n-3}),2,n-4)\\=&~-149 + 37 n + \frac{83}{2}n^2 - \frac{37}{2}n^3 + 2 n^4 + 
 \sqrt{n^2-8} (-32 +\frac{99}{2}n - \frac{37}{2} n^2 + 2 n^3)\\
>&~-149 + 37 n + \frac{83}{2}n^2 - \frac{37}{2}n^3 + 2 n^4 + 
(n-2) (-32 +\frac{99}{2}n - \frac{37}{2} n^2 + 2 n^3)\\
=&~-85 - 94 n + 128 n^2 - 41 n^3 + 4 n^4\\
>&~0~~~ \text{for $n\ge 7$}.\end{split}
\end{equation}
Note that $$\lambda_2(H_1^{2,n-4})\le\lambda_1(H_1^{2,n-4}-u_2)= \lambda_1(H_1^{1,n-4}) <\lambda_1(\dot{G}^{1,n-3}),$$
then from Eq. (\ref{eq3}), we have  $\lambda_1(H_1^{2,n-4})<\lambda_1(\dot{G}^{1,n-3})$.

This completes the proof.
\end{proof}
 Let $H^{s,t}_2$ ($s+t=n-2$ and $s\ge 2$) be the underlying graph of  $\dot{G}^{s,t}-u_1u_2$.
 
\begin{lemma}\label{l3.8}
$ \lambda_1(H^{s,t}_2)<\lambda_1(\dot{G}^{1,n-3})$.
\end{lemma}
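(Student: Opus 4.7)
The plan is to closely parallel Lemma \ref{l3.7}. First, I would set up an equitable partition of $V(H^{s,t}_2)$ into the orbits $\{u\},\ \{v\},\ \{u_1,u_2\},\ \{u_3,\dots,u_s\},\ \{v_1,\dots,v_t\}$, which collapses to four cells when $s=2$. The key observation is that although the edge $u_1u_2$ has been deleted, the vertices $u_1$ and $u_2$ still share the common neighborhood $\{u\}\cup\{u_3,\dots,u_s\}\cup\{v_1,\dots,v_t\}$, so $\{u_1,u_2\}$ genuinely forms a single equitable cell. By Lemma \ref{lem2.5}, $\lambda_1(H^{s,t}_2)=\lambda_1(Q_2)$, where $Q_2=Q_2(s,t)$ is the corresponding $5\times 5$ (resp.\ $4\times 4$) quotient matrix, and I would set $f_2(x,s,t)=\det(xI-Q_2)$.

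Next, I would eliminate one parameter by a monotonicity shuffle, using the symmetry assumption $t\ge s$. Expanding the difference $f_2(x,s-1,t+1)-f_2(x,s,t)$, I would hope (mirroring the factorization $(s-1-t)x(2x^2+3x-2)$ appearing in the analogous step of Lemma \ref{l3.7}) that it factors as $(s-1-t)\cdot p(x)$ with $p(x)>0$ for $x\ge 1$. This would yield $\lambda_1(H_2^{s-1,t+1})>\lambda_1(H_2^{s,t})$ for $s\ge 3$, reducing the lemma to the single extremal case $(s,t)=(2,n-4)$.

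For this extremal case I would substitute $x_0=\lambda_1(\dot{G}^{1,n-3})=\tfrac{1}{2}(\sqrt{n^2-8}+n-4)$ into $f_2(x,2,n-4)$, using the defining relation $x_0^2=(n-4)x_0+(2n-6)$ repeatedly to reduce the expression to the form $A(n)+\sqrt{n^2-8}\,B(n)$ for explicit polynomials $A,B$. Then, exactly as in Eq.~(\ref{eq3}), positivity of this quantity for $n\ge 7$ would follow from the estimate $\sqrt{n^2-8}>n-2$ together with a polynomial inequality in $n$. To upgrade $f_2(x_0,2,n-4)>0$ into $x_0>\lambda_1(H_2^{2,n-4})$, I would also need $\lambda_2(H_2^{2,n-4})<x_0$; this comes from Cauchy interlacing (Lemma \ref{Lem2.5}), since $\lambda_2(H_2^{2,n-4})\le\lambda_1(H_2^{2,n-4}-u_2)$ and the latter is an $(n-1)$-vertex graph whose spectral radius can be bounded below $x_0$ either via Lemma \ref{3.4} or by a direct quotient computation.

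The main obstacle I anticipate is purely algebraic: the $5\times 5$ characteristic polynomial $f_2(x,s,t)$ is visibly messier than its $H_1^{s,t}$ counterpart, and the reduction of $f_2(x_0,2,n-4)$ modulo $x_0^2=(n-4)x_0+(2n-6)$ produces long expressions that must collapse into a tractable form. A secondary concern is the sign of $f_2(x,s-1,t+1)-f_2(x,s,t)$: should it turn out to point the other way, the extremal case becomes $(\lfloor(n-2)/2\rfloor,\lceil(n-2)/2\rceil)$, and the same substitute-and-estimate scheme would have to be run on that polynomial instead.
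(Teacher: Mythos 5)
There is a genuine gap, and it is exactly the one you half-anticipated but then mis-diagnosed: your ``symmetry assumption $t\ge s$'' is not available for $H_2^{s,t}$. The deleted edge $u_1u_2$ lies entirely on the $u$-side of the construction, so unlike $H_1^{s,t}$ (where the missing edge $u_1v_1$ straddles the two sides) the graph $H_2^{s,t}$ is \emph{not} isomorphic to $H_2^{t,s}$: its complement is the disjoint union of a star $K_{1,t}$ centred at $u$ and a star $K_{1,s}$ centred at $v$ with the extra edge $u_1u_2$ attached, and these are different graphs for $(s,t)$ versus $(t,s)$. Consequently you must run the monotonicity argument over the full range $2\le s\le n-3$, and when you do, the inequality goes the opposite way from the one you bet on: the paper computes
\[
f_2(x,n-3,1)-f_2(x,s,n-s-2)=-(n-s-3)\,x\,(2x-1)\bigl((s-1)x+2s-4\bigr)<0\quad\text{for }x\ge 1,\ 3\le s\le n-4,
\]
so the extremal member of the family with $s\ge 3$ is $H_2^{n-3,1}$, not $H_2^{2,n-4}$. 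Your fallback (``the extremal case becomes the balanced split'') is also wrong, because it is still reasoned inside the spurious half-range $s\le t$. Verifying $f_2(x_0,2,n-4)>0$ alone therefore does not prove the lemma; you would be checking a graph whose spectral radius is strictly smaller than that of $H_2^{n-3,1}$, which is the one that actually needs to be beaten.

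Everything else in your plan matches the paper's proof in structure and would go through once the extremal case is corrected. Your five-cell partition with $\{u_1,u_2\}$ as a single cell is equitable (they are non-adjacent twins) and is a harmless variant of the paper's six-cell partition; the case $s=2$ must indeed be treated separately because the cell $\{u_3,\dots,u_s\}$ is empty, and the paper does so with its own quotient matrix. For the genuine extremal case $(s,t)=(n-3,1)$ the paper substitutes $x_0=\lambda_1(\dot{G}^{1,n-3})$, uses $\sqrt{n^2-8}>n-2$ to get $f_2(x_0,n-3,1)>0$ for $n\ge 7$, and controls $\lambda_2(H_2^{n-3,1})$ by interlacing against $H_2^{n-3,1}-v_1$ together with the bound of Lemma \ref{3.4}, exactly as you propose for your (wrong) candidate. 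So the repair is mechanical: redo the difference computation without assuming $t\ge s$, identify $(n-3,1)$ as the maximiser for $s\ge 3$, run your substitute-and-estimate scheme there, and keep the separate $s=2$ computation.
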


\begin{proof}
If $s\ge 3,$ then
the adjacency matrix $A_2$ of graph $H_2^{s,t}$ and its quotient matrix $Q_2$ are
\begin{gather*}
A_2=
\begin{bmatrix}
0      & 1&1 & 1 &   j_{s-2}&\textbf{0}\\
1     & 0&0&0& \textbf{0}& j_{t}\\
1 &0&  0&0&j_{s-2}  &j_{t}\\
1    &0&  0&0&j_{s-2}  &j_{t}\\
j_{s-2}^T     &0  &j_{s-2}^T &j_{s-2}^T&(J-I)_{{s-2}} &J\\
0    &j_t^T&j_t^T&j_t^T&J &(J-I)_{t} 
\end{bmatrix}~\text{and}~
Q_2=
\begin{bmatrix}
0      & 1&1 & 1 &   s-2&0\\
1    & 0&0&0&0&t\\
1 &0&  0&0&s-2  &t\\
1    &0&  0&0&s-2  &t\\
1     &0  &1&1&s-3 &t\\
0    &1&1&1&s-2 &t-1 
\end{bmatrix}.
\end{gather*}
By Lemma \ref{lem2.5}, we have $\lambda_1(H_2^{s,t})=\lambda_1(Q_2).$
Then the characteristic polynomial of the matrix $Q_{2}$   
is
$f_2(x,s,t)= x^6+ (4 - s - t) x^5+ (6 - 4 s - 4 t) x^4+ (2 - 4 s - 
    4 t + 2 s t) x^3 + (-5 + s - 3 t + 3 s t) x^2+(-4 + 2 s + 4 t - 2 s t) x .$

Note that $$f_2(x,n-3,1)-f_2(x,s,n-s-2)=-(n-s-3) x (2 x-1) ((s-1) x+2s-4).$$
Since  $3\le s\le n-4,$
then    $f_2(x,n-3,1)-f_2(x,s,n-s-2)< 0$ when $x\ge1.$ 
Since $\lambda_1(H_2^{s,n-s-2})$ is the largest root of $f_2(x,s,n-s-2)=0,$ it follows that 
$$\lambda_1(H_2^{n-3,1})>\lambda_1(H_2^{s,n-s-2})~ \text{for}~3\le s\le n-4.$$ 
Moreover, we have 
 \begin{equation}\label{eq4}
\begin{split}
&f_2(\lambda_1(\dot{G}^{1,n-3}),n-3,1)\\=&~
\frac{1}{4}(4 n^3-32 n^2+64 n) \sqrt{n^2-8}+\frac{1}{4} (4 n^4-32 n^3+48 n^2+128 n-288)
\\
>&~\frac{1}{4}(4 n^3-32 n^2+64 n)(n-2)+\frac{1}{4} (4 n^4-32 n^3+48 n^2+128 n-288)\\
=&~2 (-36 + 22 n^2 - 9 n^3 + n^4)\\
>&~0~~\text{for $n\ge 7$}.\end{split}
\end{equation}
 By Lemmas \ref{Lem2.5}, \ref{3.4} and Remark \ref{r3.9}, we have
   \begin{equation*}
\begin{split}\lambda_2(H_2^{n-3,1})&\le \lambda_1(H_2^{n-3,1}-v_1)\le \sqrt{2e(H_2^{n-3,1}-v_1)\frac{\omega(H_2^{n-3,1}-v_1)-1}{\omega(H_2^{n-3,1}-v_1)}}\\&=\sqrt{(n-2)(n-4)}< n-3<\lambda_1(\dot{G}^{1,n-3}).
\end{split}
\end{equation*} Thus from Eq. (\ref{eq4}), we have $\lambda_1(H_2^{n-3,1})<\lambda_1(\dot{G}^{1,n-3})$. 
 
  If $s=2,$ then
the adjacency matrix $A_2$ of graph $H_2^{2,n-4}$ and its quotient matrix $Q_2$ are
\begin{gather*}
A_2=
\begin{bmatrix}
0      & 1&1 & 1 &   \textbf{0}\\
1     & 0&0&0&  j_{n-4}\\
1 &0&  0&0&j_{n-4}\\
1    &0&  0&0&j_{n-4}\\
0    &j_{n-4}^T&j_{n-4}^T&j_{n-4}^T&(J-I)_{n-4} 
\end{bmatrix}~\text{and}~
Q_2=
\begin{bmatrix}
0      & 1&1 & 1 &   0\\
1  & 0&0&0&{n-4}\\
1 &0&  0&0&{n-4}\\
1    &0&  0&0&{n-4}\\
0    &1&1&1&n-5
\end{bmatrix}.
\end{gather*}
By Lemma \ref{lem2.5}, then $\lambda_1(H_2^{s,t})=\lambda_1(Q_2).$
Then the characteristic polynomial of the matrix $Q_{2}$   
is
$f_2(x)=x^2 (-14 + 5 x + 4 x^2 + x^3 - n (-3 + 2 x + x^2)).$
And we have
 \begin{equation}\label{eq51}
\begin{split}
f_2(\lambda_1(\dot{G}^{1,n-3}))&=1/2(\lambda_1(\dot{G}^{1,n-3}))^2(9 n-38   - \sqrt{ n^2-8})\\
&>1/2(\lambda_1(\dot{G}^{1,n-3}))^2(9 n-38   - (n-2))\\
&=(\lambda_1(\dot{G}^{1,n-3}))^2(4n-18)\\
&>0.\end{split}
\end{equation}
By Lemma \ref{Lem2.5}, we have $$\lambda_2(H_2^{2,n-4})\le \lambda_1(H_2^{2,n-4}-v_1)= \lambda_1(H_2^{2,n-5})< \lambda_1(H_2^{3,n-5})< \lambda_1(H_2^{n-4,1})<\lambda_1(\dot{G}^{1,n-3}).$$
 Thus from Eq. (\ref{eq51}), we have $\lambda_1(H_2^{2,n-4})<\lambda_1(\dot{G}^{1,n-3})$. 
 
 This completes the proof.
\end{proof}

Let $H^{s^\prime,t^\prime}_3$ be the (unsigned) graph obtained from 
the underlying graph $G^{s^\prime,t^\prime}$ of 
$\dot{G}^{s^\prime,t^\prime}$  ($s^\prime+t^\prime=n-3$ and $s^\prime,t^\prime\ge 1$) and one isolated vertex $w$ by adding  positive edges $wu_1,\dots,wu_{s^\prime}$ and $wv_1,\dots,wv_{t^\prime}.$ 
 
\begin{lemma}\label{l3.9}
$ \lambda_1(H^{s^\prime,t^\prime}_3)<\lambda_1(\dot{G}^{1,n-3})$.
\end{lemma}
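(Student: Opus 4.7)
The plan is to follow the quotient-matrix strategy used in Lemmas \ref{l3.7} and \ref{l3.8}. First, I consider the equitable partition
\[
V(H_3^{s',t'})=\{u\}\cup\{v\}\cup\{w\}\cup\{u_1,\ldots,u_{s'}\}\cup\{v_1,\ldots,v_{t'}\}
\]
and write down the corresponding $5\times 5$ quotient matrix $Q_3$. By Lemma \ref{lem2.5}, $\lambda_1(H_3^{s',t'})=\lambda_1(Q_3)$, so it suffices to analyze the characteristic polynomial $f_3(x,s',t'):=\det(xI-Q_3)$ along the line $s'+t'=n-3$.

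Second, by the $(s',t')\leftrightarrow(t',s')$ symmetry of $H_3^{s',t'}$, I may assume $t'\ge s'$. I would compute the difference $f_3(x,s'-1,t'+1)-f_3(x,s',t')$ and, in analogy with the factorization $f_2(x,n-3,1)-f_2(x,s,n-s-2)=-(n-s-3)x(2x-1)((s-1)x+2s-4)$ from the proof of Lemma \ref{l3.8}, expect it to factor so that its sign for $x\ge 1$ is determined by $\mathrm{sgn}(s'-1-t')$. This would yield a monotonicity statement reducing the problem to an extremal pair, presumably $(s',t')=(1,n-4)$ (or $(n-4,1)$ by symmetry).

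Third, writing $\lambda:=\lambda_1(\dot{G}^{1,n-3})=\tfrac{1}{2}(\sqrt{n^2-8}+n-4)$, I would verify $f_3(\lambda,1,n-4)>0$ for $n\ge 7$ by direct substitution, using the estimate $\sqrt{n^2-8}>n-2$ (valid for $n\ge 5$). To ensure this positivity implies $\lambda_1(H_3^{1,n-4})<\lambda$ rather than $\lambda$ lying strictly between $\lambda_2(H_3^{1,n-4})$ and $\lambda_1(H_3^{1,n-4})$, I would bound $\lambda_2(H_3^{1,n-4})$ using Cauchy interlacing (Lemma \ref{Lem2.5}) applied to the principal submatrix obtained by deleting $w$: the resulting graph $H_3^{1,n-4}-w$ is the underlying graph $G^{1,n-4}$, with clique number $n-3$ and edge count $\binom{n-3}{2}+n-2$, so Lemma \ref{3.4} combined with Remark \ref{r3.9} yields $\lambda_2(H_3^{1,n-4})\le\lambda_1(G^{1,n-4})<\lambda$, closing the argument.

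The main obstacle I anticipate is the monotonicity step. Since $Q_3$ has one more cell than the quotient matrices in Lemmas \ref{l3.6} and \ref{l3.8}, $f_3$ is a degree-$5$ polynomial and its $(s',t')$-increments may not factor as transparently. Should a clean factorization fail, a viable fallback is to fix $x=\lambda$ at the outset and treat $f_3(\lambda,s',n-3-s')$ as a low-degree polynomial in $s'$ on $\{1,\ldots,n-4\}$, verifying positivity directly via the closed form of $\lambda$ and the hypothesis $n\ge 7$; this trades a clean structural comparison for an explicit but elementary polynomial inequality in $n$.
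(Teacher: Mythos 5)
Your proposal follows the paper's argument almost exactly: the same equitable partition and $5\times 5$ quotient matrix $Q_3$, and the same monotonicity step, where the increment does factor cleanly as $f_3(x,s'-1,t'+1)-f_3(x,s',t')=(s'-1-t')(2x^2+x-2)<0$ for $x\ge 1$ and $s'\le t'$, so the obstacle you anticipated does not arise. The only divergence is at the extremal pair $(1,n-4)$: instead of your direct verification of $f_3(\lambda,1,n-4)>0$ combined with the interlacing bound on $\lambda_2(H_3^{1,n-4})$ (which would also work), the paper simply observes that $H_3^{1,n-4}$ is isomorphic to $H_1^{1,n-3}$ and invokes Lemma \ref{l3.7}.
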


\begin{proof}
Because of the symmetry, we assume that $t^\prime\ge s^\prime.$
Then
the adjacency matrix $A_3$ of graph $H^{s^\prime,t^\prime}_3$ and its quotient matrix $Q_3$ are
\begin{gather*}
A_3=
\begin{bmatrix}
0      & 1&j_{s^\prime} &    \textbf{0}&0\\
1     & 0&  \textbf{0}&  j_{t^\prime} &0\\
j_{s^\prime}^T &\textbf{0}&  (J-I)_{s^\prime}&J&j_{s^\prime}  \\
\textbf{0}    &j_{t^\prime}^T&  J& (J-I)_{t^\prime}&j_{t^\prime}  \\
\textbf{0}    &\textbf{0}  &j_{s^\prime}^T &j_{t^\prime}^T&0
\end{bmatrix}~\text{and}~
Q_3=
\begin{bmatrix}
0      & 1&s^\prime &   0&0\\
1    & 0&0& t^\prime &0\\
1 &0&  s^\prime-1& t^\prime&1 \\
0  &1&  s^\prime& t^\prime-1&1  \\
0   &0  &s^\prime&t^\prime&0
\end{bmatrix}.
\end{gather*}
By Lemma \ref{lem2.5}, we have $\lambda_1(H_3^{s^\prime,t^\prime})=\lambda_1(Q_3).$
Then the characteristic polynomial of the matrix $Q_{3}$   
is
$f_3(x,s^\prime,t^\prime)=s^\prime + t^\prime - 2 s^\prime t^\prime - (1 - 2 s^\prime - 2 t^\prime - s^\prime t^\prime) x - (2 + s^\prime + t^\prime - 
    2 s^\prime t^\prime) x^2 - (3 s^\prime + 3 t^\prime) x^3 - (-2 + s^\prime + t^\prime) x^4 + x^5.$

Note that $$f_3(x,s^\prime-1,t^\prime+1)-f_3(x,s^\prime,t^\prime)=(s^\prime-1  - t^\prime) (2 x^2 + x-2).$$
Since  $s^\prime \le t^\prime,$
then    $f_3(x,s^\prime-1,t^\prime+1)-f_3(x,s^\prime,t^\prime)< 0$ when $x\ge 1.$ 
Since $\lambda_1(H_3^{s^\prime,t^\prime})$ is the largest root of $f_3(x,s^\prime,t^\prime)=0,$ it follows that 
$$\lambda_1(H_3^{s^\prime-1,t^\prime+1})>\lambda_1(H_3^{s^\prime,t^\prime}).$$ 
Hence,
 $\lambda_1(H_3^{1,n-4})>\dots>\lambda_1(H_3^{\lfloor\frac{n-3}{2}\rfloor,\lceil\frac{n-3}{2}\rceil}).$
Note that the graph $H_3^{1,n-4}$ is isomorphic to the graph $H_1^{1,n-3}.$ So
 $\lambda_1(H_3^{1,n-4})<\lambda_1(\dot{G}^{1,n-3})$ by Lemma \ref{l3.7}. 
  
   This completes the proof.
\end{proof}
By Lemmas \ref{l3.7}, \ref{l3.8} and \ref{l3.9}, we have 

\begin{corollary}\label{c3.9}
Let $G$ be a connected (unsigned) graph of order $n,$
and let  $u$ and $v$ be two  vertices in $G$ such that  $u$ and $v$ have no common neighbor in $G.$ Suppose that there have two vertices $u_1$ and $v_1$ in $G$ such that $u\sim u_1$ and $v\sim v_1.$

$(i)$  If there is a vertex $w$ that is  adjacent to neither $u$ nor $v$, then $\lambda_1(G)< \lambda_1(\dot{G}^{1,n-3}).$

$(ii)$ If  $G[N(u)\cup N(v)]$ is not a clique, then $\lambda_1(G)< \lambda_1(\dot{G}^{1,n-3}).$

\end{corollary}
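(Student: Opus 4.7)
The plan is to use Lemma \ref{l3.3} (adding an edge to a connected graph strictly increases its spectral radius) to embed $G$ as a spanning subgraph of one of $H_1^{s,t}$, $H_2^{s,t}$, or $H_3^{s',t'}$, and then to invoke Lemma \ref{l3.7}, Lemma \ref{l3.8}, or Lemma \ref{l3.9}, respectively, to obtain the strict bound $\lambda_1(G) < \lambda_1(\dot G^{1,n-3})$. The only subtle point is choosing which edges to add so that the resulting graph is \emph{exactly} one of these three target graphs with legal parameters; strictness is then automatic, since either at least one edge was added (so Lemma \ref{l3.3} is strict) or $G$ already equals the target (and the three preceding lemmas handle that case strictly).

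For part $(i)$, I would set $S = N(u)\setminus\{v\}$, $T = N(v)\setminus\{u\}$, and $W = V(G)\setminus(\{u,v\}\cup S\cup T)$. By hypothesis $S\cap T = \emptyset$, $w\in W$ so $|W|\ge 1$, and $S,T\ne\emptyset$ because $u_1\in S$ and $v_1\in T$. Augment $G$ in three stages: (a) add the edge $uw'$ for every $w'\in W\setminus\{w\}$, which absorbs these vertices into $N(u)$ without creating any common neighbor of $u$ and $v$; (b) add $uv$ if it is missing; (c) add every remaining non-edge inside $S\cup(W\setminus\{w\})\cup T$ so that this set becomes a clique, and add every non-edge between $w$ and this set. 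Since $w\in W$ has $wu,wv\notin E(G)$ and every vertex of $W\setminus\{w\}$ is a non-neighbor of $v$, no edge of $G$ lies outside the target. Setting $s' = |S| + |W| - 1$ and $t' = |T|$ we obtain $s'+t' = n-3$ with $s',t'\ge 1$, and the resulting graph $G^{\ast}$ coincides with $H_3^{s',t'}$. Lemma \ref{l3.3} yields $\lambda_1(G)\le \lambda_1(G^{\ast})$, and Lemma \ref{l3.9} gives $\lambda_1(G^{\ast}) < \lambda_1(\dot G^{1,n-3})$, finishing $(i)$.

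For part $(ii)$, if some vertex of $V(G)$ is adjacent to neither $u$ nor $v$, then part $(i)$ already applies. Otherwise $V(G) = \{u,v\}\cup N(u)\cup N(v)$, and setting $s = |N(u)\setminus\{v\}|$, $t = |N(v)\setminus\{u\}|$ we have $s+t = n-2$ with $s,t\ge 1$. By hypothesis, $G[N(u)\cup N(v)]$ contains a non-edge $e$. If $e$ joins a vertex of $N(u)$ to a vertex of $N(v)$, I would relabel it as $u_1v_1$ and add every other absent edge (including $uv$ if missing) to obtain $G^{\ast} = H_1^{s,t}$; Lemma \ref{l3.7} concludes. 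If instead $e$ lies entirely inside one part, by symmetry I may assume $e = u_1u_2\subset N(u)$, so $s\ge 2$; the analogous augmentation gives $G^{\ast} = H_2^{s,t}$, and Lemma \ref{l3.8} finishes the argument.
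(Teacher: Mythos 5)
Your proposal is correct and follows essentially the same route as the paper: identify $G$ as a spanning subgraph of $H_3^{s',t'}$ in case $(i)$ and of $H_1^{s,t}$ or $H_2^{s,t}$ in case $(ii)$, then combine Lemma \ref{l3.3} with Lemmas \ref{l3.7}--\ref{l3.9}. You simply make explicit the vertex assignment and edge additions that the paper leaves implicit.
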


\begin{proof}
 If there is a vertex $w$ that is  adjacent to neither $u$ nor $v,$ then $G$ is a spanning subgraph of $H_3^{s^\prime,t^\prime}.$ By Lemma \ref{l3.3}, then
$\lambda_1(G)\le   \lambda_1(H_3^{s^\prime,t^\prime})$. 
 If  $G[N(u)\cup N(v)]$ is not a clique, then  $u_{i_1}\not\sim u_{i_2}$, or $v_{j_1}\not\sim v_{j_2}$, or $u_{i}\not\sim v_{j}$ for some vertices $u_{i_1}, u_{i_2},u_{i}$ of $N(u)\setminus\{v\}$ and 
 $v_{j_1}, v_{j_2}, v_{j}$ of $N(v)\setminus\{u\}.$ Then $G$ is a spanning subgraph of $H_1^{s,t}$ or $H_2^{s,t}.$
 By Lemma \ref{l3.3}, then
$ \lambda_1(G)\le \lambda_1(H_i^{s,t})$ for $i=1$ or 2.
Now by Lemmas  \ref{l3.7}, \ref{l3.8} and  \ref{l3.9}, we have $\lambda_1(G)<\lambda_1(\dot{G}^{1,n-3}).$
\end{proof}

 The  \emph{negation} of  $\dot{G}$  (denoted by $-\dot{G}$) is  obtained  by reversing the sign of every edge  in $\dot{G}$.
Obviously, the eigenvalues of $-\dot{G}$ are obtained by reversing the sign of the eigenvalues of $\dot{G}.$ 

\noindent\emph{Proof of Theorem \ref{thm2}:}
Suppose to the contrary that there is  a connected $C_3^-$-free signed graph $\dot{G}$  that is  not switching equivalent to $\dot{G}^{1,n-3}$ and
$\rho(\dot{G})\ge \lambda_1(\dot{G}^{1,n-3}).$

Recall that $\rho(\dot{G})=max\{\lambda_1(\dot{G}),-\lambda_n(\dot{G})\}.$ If
$\lambda_1(\dot{G})<-\lambda_n(\dot{G}),$ then
 $\rho(\dot{G})=-\lambda_n(\dot{G})$ and we consider the negation of $\dot{G}$ instead.
For convenience, let $\dot{G}_1=-\dot{G}.$ Then $\rho(\dot{G})=-\lambda_n(\dot{G})=\lambda_1(\dot{G}_1).$
Since $\dot{G}$ is $C_3^-$-free, then  $\dot{G}_1$ is $C_3^+$-free.
Therefore, $\omega_b=2.$ By Lemma \ref{lem3.5}, we have 
$\lambda_1(\dot{G}_1)\le \sqrt{e(\dot{G})}.$
By Theorem \ref{thm1},
 we know that $e(\dot{G}_1)=e(\dot{G})\le \frac{n(n-1)}{2}-n+2.$
 Then 
  \begin{equation*}\label{eq5}
\begin{split}
\lambda_1(\dot{G}_1)&\le \sqrt{\frac{n(n-1)}{2}-n+2}= \sqrt{\frac{(n-1)(n-2)}{2}+1}\\&<  \sqrt{\frac{(n-1)(n-2)+n^2-7n+\frac{21}{2}}{2}}=n-\frac{5}{2}.\end{split}
\end{equation*}
 By Remark \ref{r3.9}, then $\lambda_1(\dot{G}_1)=\rho(\dot{G})
<\lambda_1(\dot{G}^{1,n-3})$,  which contradicts to the hypothesis. Next we just consider that $\lambda_1(\dot{G})\ge -\lambda_n(\dot{G})$
and let $\rho(\dot{G})=\lambda_1(\dot{G}).$ 
We divide into two cases: $N(u)\cap N(v)=\emptyset$ and $N(u)\cap N(v)\ne \emptyset$.

\textbf{Case 1.} $N(u)\cap N(v)=\emptyset.$
 Up to switching equivalence, we can assume that $\sigma(vv_j)=+1$ for all $j=1,\dots,t.$ 
By Corollary \ref{c3.9}, we get that each vertex in $\dot{G}$ is adjacent to either $u$ or $v$ and $\dot{G}[V(\dot{G})\setminus \{u,v\}]$ is a clique, otherwise 
$\lambda_1(\dot{G})\le  \lambda_1(G)< \lambda_1(\dot{G}^{1,n-3})$, which contradicts to the hypothesis. 
Since $\dot{G}$ is unbalanced, then  
 $\dot{G}[V(\dot{G})\setminus \{u,v\}]$ has at least one positive edge, say $u_1v_1.$
If $\dot{G}[V(\dot{G})\setminus \{u,v\}]$  has a negative edge, say $u_iv_j,$ 
since $u_1\mathop{\sim}\limits^{+} u_i$ and  $v_1\mathop{\sim}\limits^{+} v_j$ (by Fact 1),
then $\{u_1,u_i,v_j\}$ or $\{v_1,u_i,v_j\}$ induces a negative triangle $C_3^-,$ which is a 
contradiction.
Therefore, $\dot{G}[V(\dot{G})\setminus \{u,v\}]$ is an all-positive clique and 
$\dot{G}\sim \dot{G}^{s,t}.$ By Lemma \ref{l3.6}, we have $\lambda_1(\dot{G})\le \lambda_1(\dot{G}^{1,n-3}),$ with equality holding if and only if $\dot{G}\sim \dot{G}^{1,n-3}.$

\textbf{Case 2.} $N(u)\cap N(v)\ne\emptyset$.
Suppose that  $N(u)\cap N(v)=W=\{w_1,\dots,w_k\}$ $(k\ge 1)$ and
 $N[u]\setminus N[v]=\{u_1,\dots,u_{s_1}\}$ ($s_1+k=s$) and $N[v]\setminus N[u]=\{v_1,\dots,v_{t_1}\}$ ($t_1+k=t$), where $s_1\ge 1$ and $t_1\ge 1$ by Lemma \ref{2}.
Since $\dot{G}$ is  $C_3^-$-free, then $\sigma(vw_i)=-1$ for  $i=1,\dots,k.$ Up to switching equivalence, we can let $\sigma(vv_i)=-1$ for 
each vertex $v_i\in N[v]\setminus N[u].$ Now switching at vertex $v,$ 
then all edges associated to the vertex $v$ are positive. 
 Without loss of generality, we assume that 
 $\mathcal{C}=uu_1v_1vu$ (if the length of $\mathcal{C}$ is 4) or $\mathcal{C}=uu_1z_1\dots z_q v_1vu$ (if the length of $\mathcal{C}$ is greater than 4).

\noindent\textbf{Claim 1.}
Two vertices $u_1$ and $v_1$ have no common neighbor in $\{w_1,\dots,w_k\}.$

\begin{proof}
If $u_1\mathop{\sim}\limits^{-}  v_1$, then $u_1$ and $v_1$ have no common neighbor in $\{w_1,\dots,w_k\}$ as $\dot{G}$ is $C_3^-$-free.
Otherwise $u_1\mathop{\sim}\limits^{+}  v_1$ or $u_1\not\sim v_1$.
If $u_1$ and $v_1$ have a common neighbor in $\{w_1,\dots,w_k\},$ say $w_1,$
then we can find an  unbalanced cycle $\mathcal{C}^\prime=w_1u_1z_1\dots z_q v_1w_1$ with fewer length, which 
contradicts to the choice of $\mathcal{C}$.
 This completes the proof.
\end{proof}

Let $W=W_0\cup W_1\cup W_2$, where each vertex in $W_0$  is adjacent to neither $u_1$ nor $v_1,$   each vertex in $W_1$ (resp.  $W_2$) is adjacent to $u_1$ (resp. $v_1$) and $|W_i|=k_i$ for $i=0, 1$ or 2. 
By Claim 1, we have $W_1\cap W_2=\emptyset$.
If $t_1=1$ and $k_2=0,$ then  $N(v)\cap N(v_1)=\emptyset$,  it is  similar to Case 1.
Then $t_1\ge 2$ or $k_2\ge 1.$ Similarly, $s_1\ge 2$ or $k_1\ge 1.$

Let $V(\dot{G})\setminus (\{u,v\}\cup N(u)\cup N(v))=Z=\{z_1,\dots,z_p\}$ and
let $\textbf{x} = (x_u, x_v,x_{u_1},\dots,x_{u_{s_1}},x_{v_1},$ $\dots,x_{v_{t_1}},x_{w_1}, \dots, x_{w_k},x_{z_1}, \dots, x_{z_p})^T$ be a unit eigenvector associated with $\lambda_1({\dot{G}})$
and  $U=\{i|x_i<0 \}$ be the vertex subset of $V(\dot{G}).$
Then $\dot{G}_U=(\dot{G},\sigma_U)$ is obtained from $\dot{G}$ by switching at vertex subset  $U.$ 
By Lemma \ref{l3.2} and Remark \ref{r3.7}, then 
  $\lambda_1(\dot{G})=\lambda_1(\dot{G}_U)\le \lambda_1(\dot{H}),$ where $\dot{H}$
is  a balanced spanning subgraph of $\dot{G}_U$ by removing all negative edges.
By Corollary \ref{c3.9}, we have  three claims.

\noindent\textbf{Claim 2.}  $x_v<0$ if and only if $x_u<0$.
\begin{proof}
If $x_v<0,$ then $v\in U$.
For a contradiction, assume that $x_u\ge 0.$ Then 
 $\sigma_U(uw_i)\sigma_U(vw_i)=-1$ for $i=1,\dots,k$ and exactly one edge of $uw_i$ and $vw_i$ (in $\dot{G}_U$) is negative. So $u$ and $v$ have no common neighbor in $\dot{H}.$  
 Let  $\dot{H}^\prime$ be $\dot{H}$ or $\dot{H}+vv_1+uu_1$ (if $v\not\sim v_1$ or $u\not\sim u_1$ in $\dot{H}$) where $\sigma(vv_1)=\sigma(uu_1)=+1.$
Since $w_i\not\sim v_1$ or $w_i\not\sim u_1,$ then $\dot{H}^\prime[N(u)\cup N(v)]$ is not a clique. By Lemma \ref{l3.3} and Corollary \ref{c3.9}, then   $\lambda_1(\dot{G}) \le  \lambda_1(\dot{H})\le \lambda_1(\dot{H}^\prime)< \lambda_1(\dot{G}^{1,n-3})$, which contradicts to the hypothesis. So $x_u<0.$
Because of the symmetry, if $x_u<0,$ then $x_v<0.$
\end{proof}

\noindent\textbf{Claim 3.} $x_{v_1}<0$ if and only if $x_{v}<0$.
\begin{proof}  If $x_{v_1}<0,$ then $v_1\in U$.
For a contradiction, assume that $x_v\ge 0.$
Then
 $\sigma_U(vw)\sigma_U(v_1w)=-1$ for all vertices $w\in N(v)\cap N(v_1).$ So  $v$ and $v_1$ have no common neighbor in $\dot{H}.$ 
  Let  $\dot{H}^\prime$ be $\dot{H}$ or $\dot{H}+u_1v_1$ (if $u_1\not\sim v_1$ in $\dot{H}$) where $\sigma(u_1v_1)=+1.$
Since $u\not\sim v_2$ (if $t_1\ge 2$) or $u_1\not\sim w_i$ for one $i$ (if $k_2\ge 1$), then $\dot{H}^\prime[N(v)\cup N(v_1)]$ is not a clique. By Lemma \ref{l3.3} and  Corollary \ref{c3.9}, then   $\lambda_1(\dot{G}) \le  \lambda_1(\dot{H})\le  \lambda_1(\dot{H}^\prime)< \lambda_1(\dot{G}^{1,n-3})$, which contradicts to the hypothesis. So $x_v<0.$
Similarly, by Claim 2 and Corollary \ref{c3.9}, we can prove that if $x_{v}<0,$ then $x_{v_1}<0.$
\end{proof}

\noindent\textbf{Claim 4.} $x_{u_1}<0$ if and only if $x_{u}<0$.
\begin{proof}
The proof is similar to Claim 3.
\end{proof}

If $N(u_1)\cap N(v_1)=\emptyset$,  it is  similar to Case 1. Next  suppose that 
$N(u_1)\cap N(v_1)=\{y_1,\dots,y_\ell\}$ ($\ell\ge 1$). Then $y_1\in N(v)\cup N(u)\cup Z.$

\textbf{Subcase 2.1.} $\sigma(u_1v_1)=-1.$  By Claims 2, 3 and 4, then $x_{u}< 0$ if and only if 
$x_{v}< 0$ if and only if $x_{u_1}< 0$ if and only if $x_{v_1}< 0$.
 Then there are two subsubcases.

\textbf{Subsubcase 2.1.1.}  $x_{v_1}\ge0$ and $x_{u_1}\ge 0.$  Then $x_u\ge 0$ and $x_v\ge 0$.
Since $\dot{G}$ is $C_3^-$-free, we have $\sigma(u_1y_i)\sigma(v_1y_i)=-1$ for all $i=1,\dots,\ell.$ Then exactly one edge of $u_1y_i$
and $v_1y_i$ is negative.   So $u_1$ and $v_1$ has no common neighbor in $\dot{H}.$
Since $y_1\not\sim v$ (if $y_1\in N(u)\cup Z$) and $y_1\not\sim u$ (if $y_1\in N(v)$), then $\dot{H}[N(u_1)\cup N(v_1)]$ is not a clique. By Lemma \ref{l3.2} and Corollary \ref{c3.9}, then $\lambda_1(\dot{G}) \le  \lambda_1(\dot{H})< \lambda_1(\dot{G}^{1,n-3})$, which contradicts to the hypothesis.

\textbf{Subsubcase 2.1.2.}  $x_{u_1}<0$ and $x_{v_1}< 0.$  Then $x_u<0$ and $x_v<0$.
Switching at four vertices $u,v,u_1$ and $v_1,$  then it is  similar to Subsubcase 2.1.1.

 \textbf{Subcase 2.2.} $\sigma(u_1v_1)=+1$ or 0.
  Without loss of generality, we may assume that $\sigma(u_iv_j)\ne -1$ for all $i=1,\dots,s_1$ and $j=1,\dots,t_1.$
Then the length of $\mathcal{C}$ is at least 5.
 If $\sigma(u_1v_1)=+1$, then 
 we can find an unbalanced triangle $\mathcal{C}^\prime=u_1z_1v_1u_1$ (if $q=1$) or  an  unbalanced cycle $\mathcal{C}^\prime=u_1z_1\dots z_qv_1u_1$ (if $q\ge 2$) in $\dot{G}$ with fewer length, which 
contradicts to the choice of $\mathcal{C}$. 
Therefore, $\sigma(u_1v_1)=0$. 
If the length of  $\mathcal{C}$ is greater than 5, then $N(u_1)\cap N(v_1)=\emptyset$. By Corollary \ref{c3.9} again,  we have $\lambda_1(\dot{G})\le\lambda_1(G)< \lambda_1(\dot{G}^{1,n-3})$, which contradicts to the hypothesis.
Therefore, the length of  $\mathcal{C}$ is 5.
Suppose that $\mathcal{C}=uu_1y_1v_1vu$ where $u_1 \mathop{\sim}\limits^{+} y_1$ and $v_1 \mathop{\sim}\limits^{-} y_1$.  
Then  there is no vertex $w$ with $\sigma(u_1w)\sigma(v_1w)=+1,$ otherwise we can find  an  unbalanced cycle $\mathcal{C}^\prime=u_1y_1v_1wu_1$ with fewer length, which 
contradicts to the choice of $\mathcal{C}$.  So $\sigma(u_1y_i)\sigma(v_1y_i)=-1$ for all vertices $y_i \in N(u_1)\cap N(v_1)$.
Similar to Subcase 2.1, by Claims 2, 3 and 4, there are two subsubcases.

\textbf{Subsubcase 2.2.1.}  $x_{v_1}\ge0$ and $x_{u_1}\ge 0.$  Then $x_u\ge 0$ and $x_v\ge 0$,
  $u_1$ and $v_1$ has no common neighbor in $\dot{H}.$
Since $y_1\not\sim v$ and $y_1\not\sim u$, then $\dot{H}[N(u_1)\cup N(v_1)]$ is not a clique. By Lemma \ref{l3.2} and Corollary \ref{c3.9}, then $\lambda_1(\dot{G}) \le  \lambda_1(\dot{H})< \lambda_1(\dot{G}^{1,n-3})$, which contradicts to the hypothesis.

\textbf{Subsubcase 2.2.2.}  $x_{u_1}<0$ and $x_{v_1}< 0.$ Then $x_u<0$ and $x_v<0$.
Switching at four vertices $u,v,u_1$ and $v_1,$   then it is  similar to Subsubcase 2.2.1.

We complete the proof.
\hfill $\square$

\vskip 0.4 true cm
\begin{center}{\textbf{Acknowledgments}}
\end{center}

This project  is supported by the National Natural Science Foundation of China (No.11971164, 12001185, 12101557) and Zhejiang Provincial Natural Science Foundation of China (LQ21A010004).

\baselineskip=0.30in

\end{document}